                    \def\version{April 1, 2015}                       %
\def\@rmrk#1#2{\refstepcounter
    {#1}\@ifnextchar[{\@yrmrk{#1}{#2}}{\@xrmrk{#1}{#2}}}
\makeatletter\@addtoreset{equation}{section}\makeatother
 \newfont{\bfit}{cmbxti10 scaled 1200}
\renewcommand{\d}{{\rm d}}
 \newcommand{\e}{{\rm e} }
 \newcommand{\eps}{\varepsilon}
 \newcommand{\R}{\mathbb{R}}
 \newcommand{\N}{\mathbb{N}}
 \newcommand{\Z}{\mathbb{Z}}
 \newcommand{\E}{\mathbb{E}}
 \renewcommand{\P}{\mathbb{P}}
 \def\1{{\mathchoice {1\mskip-4mu\mathrm l} 
{1\mskip-4mu\mathrm l}
{1\mskip-4.5mu\mathrm l} {1\mskip-5mu\mathrm l}}}
 \newcommand{\Mcal}{{\mathcal M}}
\newcommand{\heap}[2]{\genfrac{}{}{0pt}{}{#1}{#2}}
\newcommand{\ssup}[1] {{\scriptscriptstyle{({#1}})}}
\renewcommand{\subsection}{\secdef \subsct\sbsect}
\newcommand{\subsct}[2][default]{\refstepcounter{subsection}
\vspace{0.15cm}
{\flushleft\bf \arabic{section}.\arabic{subsection}~\bf #1  }
\nopagebreak\nopagebreak}
\newcommand{\sbsect}[1]{\vspace{0.1cm}\noindent
{\bf #1}\vspace{0.1cm}}
\newtheorem{theorem}{Theorem}[section]
\newtheorem{lemma}[theorem]{Lemma}
\newtheorem{cor}[theorem]{Corollary}
\newtheorem{prop}[theorem]{Proposition}
\newtheoremstyle{thm}{1.5ex}{1.5ex}{\itshape\rmfamily}{}
{\bfseries\rmfamily}{}{2ex}{}
\newtheoremstyle{rem}{1.3ex}{1.3ex}{\rmfamily}{}
{\itshape\rmfamily}{}{1.5ex}{}
\theoremstyle{rem}
\newtheorem{remark}{{\slshape\sffamily Remark}}[]
\def\thebibliography#1{\section*{References}
  \list%
  {\arabic{enumi}.}
    {\settowidth\labelwidth{[#1]}\leftmargin\labelwidth
    \advance\leftmargin\labelsep
    \parsep0pt\itemsep0pt
    \usecounter{enumi}}
    \def\newblock{\hskip .11em plus .33em minus .07em}
    \sloppy                   
    \sfcode`\.=1000\relax}
\begin{document}
\title[Quenched large deviations for simple random walk on supercritical percolation clusters]
{\large Quenched large deviations for simple random walk on supercritical percolation clusters}
\author[Noam Berger and Chiranjib Mukherjee]{}
\maketitle
\thispagestyle{empty}
\vspace{-0.5cm}

\centerline{\sc Noam Berger \footnote{Hebrew University Jerusalem and Technical University Munich, Boltzmannstrasse 3. Garching (near Munich), {\tt noam.berger@tum.de}} and
Chiranjib Mukherjee \footnote{Technical University Munich, Boltzmannstrasse 3. Garching (near Munich), {\tt chiranjib.mukherjee@tum.de}}}
\renewcommand{\thefootnote}{}
\footnote{\textit{AMS Subject
Classification:} 60J65, 60J55, 60F10.}
\footnote{\textit{Keywords:} Random walk, percolation clusters,
large deviations}

\vspace{-0.5cm}
\centerline{\textit{Hebrew University Jerusalem and Technical University Munich}}
\vspace{0.2cm}

\begin{center}
\version
\end{center}

\begin{quote}{\small {\bf Abstract:} 
We prove a {\it{quenched}} large deviation principle (LDP) for a simple random walk on a supercritical percolation cluster on $\Z^d$, $d\geq 2$. We take the point of view of the moving particle and first prove
 a quenched LDP for the distribution of the {\it{pair empirical measures}} of the environment Markov chain. Via a contraction principle, this reduces easily to a quenched LDP for the distribution of the mean velocity of the 
 random walk and both rate functions admit explicit (variational) formulas. Our results are based on invoking ergodicity arguments in this non-elliptic set up to control the growth of {\it{gradient functions (correctors)}} which come up naturally via convex variational analysis in the context of homogenization of random Hamilton Jacobi Bellman equations along the arguments of Kosygina, Rezakhanlou and Varadhan (\cite{KRV06}). Although enjoying some similarities, our gradient function is structurally different from {\it{the}} classical {\it{Kipnis-Varadhan corrector}}, a well-studied object in the context of reversible random motions in random media.
 }

 

\end{quote}
\section{Introduction and main results}
\subsection{The model.}


We consider a simple random walk on the infinite cluster of a supercritical bond percolation on $\Z^d$, $d\geq 2$. Conditional
on the event that the origin lies in the infinite open cluster, it is known that a law of large numbers 
and quenched central limit theorem hold, see Sidoravicius and Sznitman (\cite{SS04}) for $d\geq 4$ and Mathieu- Piatnitski (\cite{MP07}) and Berger- Biskup (\cite{BB07})
for any $d\geq 2$. However, treatment of such standard questions for this model needs care because of its inherent {\it{non-ellipticity}}-- a problem which permeates in several forms in the above mentioned literature.

Questions on quenched large deviations for general random motions in random environments have also been studied (see section 2.1 for a detailed review on the existing literature) in a fundamental work of Kosygina-Rezakhanlou-Varadhan (\cite{KRV06}) for a diffusion with random drift and in related work of Rosenbluth (\cite{R06}) and Yilmaz (\cite{Y08}) for general random walks in random environments. Other results of relevance are by Rassoul-Agha, Sepp\"al\"ainen and Yilmaz (see \cite{RSY13}) on directed, undirected and stretched polymers in a random (and possibly unbounded) potential. All these results, however, require certain {\it{moment conditions}} on the environment ({\it{ellipticity}}) which needs to be necessarily dropped when studying a non-elliptic model, for example, the classical simple random walk on the supercritical percolation cluster (SRWPC). In this context, it is the goal of the present article to study quenched large deviation principles for the distribution of the empirical measures of the environment Markov chain of SRWPC ({\it{level- 2}}) and subsequently deduce the particle dynamics of the rescaled location ({\it{level -1}}) of the walk on the cluster. We start with the precise description of the model of classical bond percolation on $\Z^d$.

 We fix $d\geq 2$ and denote by $\mathbb B_d$ the set of nearest neighbor
edges of the lattice $\Z^d$ and by $\mathbb U_d$ the set of edges from the origin to its nearest neighbor. Let $\Omega= \{0,1\}^{\mathbb B_d}$ be the space of all {\it{percolation configurations}} $\omega=(\omega_b)_{b\in\mathbb B_d}$. In other words, 
$\omega_b=1$ refers to the edge $b$ being {\it{present}} or {\it{open}}, while $\omega_b=0$ 
implies that it is {\it{vacant}} or {\it{closed}}. Let $\mathcal B$ be the Borel-$\sigma$-algebra on $\Omega$ defined by the product topology. We fix the {\it{percolation parameter}} $p\in(0,1)$ and denote by $\P=\P_p:=\big(p \delta_1+ (1-p) \delta_0\big)^{\mathbb B_d}$
the product measure with marginals $\P(\omega_b=1)=p=1- \P(\omega_b=0)$. Note that 
$\Z^d$ acts as a group on $(\Omega, \mathcal B, \P)$ via translations. In other words, for each $x\in \Z^d$, $\tau_x: \Omega \longrightarrow \Omega$ acts as a {\it{shift}} given by $(\tau_x\omega)_b= \omega_{x+b}$.
Note that the product measure $\P$ is invariant under this action.

For each $\omega\in\Omega$, let $\mathcal C_\infty(\omega)= \{x\in \Z^d\colon\, x\longleftrightarrow \infty\}$ denote the set of points $x\in \Z^d$, which finds
an infinite self-avoiding path using occupied bonds in the configuration $\omega$. It is known that there is a critical percolation  probability $p_c=p_c(d)$ which is the infimum of all $p$'s such that $\P(0\in \mathcal C_\infty)>0$. In this paper we only consider the case $p>p_c$.

For $p>p_c$, the set $\mathcal C_\infty(\omega)$ is $\P$-almost surely non-empty and connected. 
Let $\Omega_0=\{ 0\in \mathcal C_\infty\}$.
For $p>p_c$ we define the conditional probability $\P_0$ by
$$
\P_0(A)= \P\big(A\big|\Omega_0\big) \qquad A\in\mathcal B.
$$
We now define a (discrete time) simple random walk on the supercritical percolation cluster $\mathcal C_\infty$ as follows. Let the walk start at the origin and at each unit of time,
the walk moves to a nearest neighbor site chosen uniformly at random from the accessible neighbors. More precisely, for each $\omega\in \Omega_0$, $x\in \Z^d$ and $e\in \mathbb U_d$, we set
\begin{equation}\label{pidef}
\pi_\omega(x,e)=\frac{ \1_{\{\omega_e=1\}} \circ \tau_x}{\sum_{ |e^\prime|=1} \1_{\{\omega_{e^\prime}=1\}} \circ \tau_x} \in [0,1],
\end{equation}
and define a simple random walk $X=(X_n)_{n\geq 0}$ as a Markov chain taking values in $\Z^d$ with the transition probabilities
\begin{equation}\label{pitransit}
\begin{aligned}
&P^{\pi,\omega}_{0}(X_0=0)=1,\\
&P_{0}^{\pi,\omega}\big(X_{n+1}= x+e\big| X_n=x\big)=\pi_\omega(x,e).
\end{aligned}
\end{equation}
This is a canonical way to ``put" the Markov chain on the infinite cluster $\mathcal C_\infty$ and henceforth, we refer to this Markov chain as the {\it{simple random walk on the percolation cluster}} (SRWPC). 

\subsection{Main results: Quenched large deviation principle.}

For each $\omega\in \Omega_0$, we consider the process $(\tau_{X_n}\omega)_{n\geq 0}$ which is a Markov chain taking values in the space of environments
$\Omega_0$. This is the {\it{environment seen from the particle}} and plays an important role in the present context, see section 3.1 for a detailed description.
We denote by
\begin{equation}\label{localtime}
\mathcal L_n= \frac 1n \sum_{n=0}^{n-1} \delta_{\tau_{X_k}\omega,{X_{k+1}-X_{k}}}
\end{equation}
the empirical measure of the environment Markov chain and the nearest neighbor steps of the SRWPC $(X_n)_{n\geq 0}$. This is a random element of $\Mcal_1(\Omega_0 \times  \mathbb U_d)$, the space of probability measures on $\Omega_0\times \mathbb U_d$, which is compact when equipped with the weak topology (note that, $\Omega_0 \subset \Omega$ is closed and hence compact). 

We note that, via the mapping $(\omega,e)\mapsto (\omega, \tau_e\omega)$ the space $\Mcal_1(\Omega_0 \times  \mathbb U_d)$ is embedded into $\Mcal_1(\Omega_0\times \Omega_0)$, 
and hence, any element $\mu\in \Mcal_1(\Omega_0\times \mathbb U_d)$ can be thought of as the {\it{pair empirical measure}} of the environment Markov chain. In this terminology,
we can define its marginal distributions by 
\begin{equation}\label{marginals}
\begin{aligned}
&\d (\mu)_1(\omega)= \sum_{e\in \mathbb U_d} \d \mu(\omega,e), \\
& \d (\mu)_{2}(\omega)= \sum_{e\colon \, \tau_e\omega^\prime=\omega} \d \mu(\omega^\prime,e)=\sum_{e\in \mathbb U_d} \d \mu(\tau_{-e}\omega,e).
\end{aligned}
\end{equation}
A relevant subspace of $\Mcal_1(\Omega_0 \times  \mathbb U_d)$ is given by  
\begin{equation}\label{relevantmeasures}
\begin{aligned}
\Mcal_1^\star=\Mcal_{1}^ {\star}(\Omega_0\times \mathbb U_d)&=\bigg\{\mu\in\Mcal_1(\Omega_0 \times  \mathbb U_d)\colon\, (\mu)_1=(\mu)_ 2\ll \P_0 \, \,\mbox{and}\, \,\P_0\mbox{- almost surely,}\\
&\qquad\qquad\frac{\d \mu(\omega,e)}{\d (\mu)_{ 1}(\omega)} >0 \,\,
\mbox{if and only if}\,\,\omega(e)=1\,\mbox{for}\, e\in \mathbb U_d\bigg\}.
\end{aligned}
\end{equation}
A simple, though important, relation between the space $\Mcal_1^\star$ and the environment process $(\tau_{X_n}\omega)_{n\geq 0}$ 
is made transparent in section 3.1-- elements in $\Mcal_1^\star$
are in one-to-one correspondence to Markov kernels on $\Omega_0$ which admit {\it{invariant probability measures}} which are {\it{absolutely continuous}} with respect to $\P_0$, see Lemma \ref{onetoone}.

Finally, we define a {\it{relative entropy functional}} $\mathfrak I: \Mcal_1(\Omega_0 \times  \mathbb U_d) \rightarrow [0,\infty]$ via
\begin{equation}\label{Idef}
\mathfrak I(\mu) =
\begin{cases}
\int_{\Omega_0} \d \P_0 \sum_{e\in \mathbb U_d} \d\mu(\omega,e) \log \frac{\d \mu(\omega,e)}{\d(\mu)_{1}(\omega) \pi_\omega(0,e)} \quad\mbox{if}\,\mu\in\Mcal_1^\star,
\\
\infty\qquad\qquad\qquad\qquad\qquad\qquad\qquad\qquad\quad\mbox{else.}
\end{cases}
\end{equation}
For every continuous, bounded and real valued function $f$ on $\Omega_0 \times  \mathbb U_d$, we denote by
$$
\mathfrak I^\star(f)= \sup_{\mu\in\Mcal_1(\Omega_0 \times  \mathbb U_d)} \big\{ \langle f,\mu\rangle - \mathfrak I(\mu)\big\}
$$
the {\it{Fenchel-Legendre transform}} of $\mathfrak I(\cdot)$ and by $\mathfrak I^{\star\star}(\mu)$, for any $\mu\in \Mcal_1(\Omega_0 \times  \mathbb U_d)$, the {\it{Fenchel-Legendre transform}} of $\mathfrak I^\star(\cdot)$. 

We are now ready to state the main result of this paper, which proves a large deviation principle for the distributions $\P^{\pi,\omega}_0 \mathcal L_n^{-1}$ and $\P^{\pi,\omega}_0 {\frac {X_n}{n}}^{-1}$ 
on $\Mcal_1(\Omega_0\times\mathbb U_d)$ and $\R^d$ respectively. Both statements hold true for $\P_0$- almost every $\omega\in \Omega_0$. In other words our results concern {\it{quenched large deviations}} and
share close analogy to the results by Rosenbluth (\cite{R06}) and Yilmaz (\cite{Y08}) for random works in random environments. In the present context, due to zero
transition probabilities of the SRWPC, we necessarily have to drop
their assumption requiring $p$-th moment of the logarithm of the random walk transition probabilities being finite, for $p>d$.
Here is the statement of our main result.
\begin{theorem}[Quenched LDP for the pair empirical measures]\label{thmlevel2}

Let $d\geq 2$ and $p>p_c(d)$. Then for $\P_0$- almost every $\omega\in \Omega_0$, the distributions  
of $\mathcal L_n$ under $P^{\pi,\omega}_0$
 satisfies a large deviation principle in the space of probability measures on
$\Mcal_1(\Omega_0 \times  \mathbb U_d)$ equipped with the weak topology.
The rate function $\mathfrak I^{\star\star}$ is the double Fenchel-Legendre transform of the functional $\mathfrak I$. Furthermore, $\mathfrak I^{\star\star}$ is convex and has compact level sets.
\end{theorem}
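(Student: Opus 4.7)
My plan is to apply Bryc's inverse Varadhan lemma. Because $\Mcal_1(\Omega_0\times\mathbb U_d)$ is compact in the weak topology, an LDP with a convex, lower semicontinuous rate function holds on it as soon as I establish, for every $f\in C_b(\Omega_0\times\mathbb U_d)$ and $\P_0$-almost every $\omega$, the existence of the deterministic limit
\begin{equation*}
\Lambda(f):=\lim_{n\to\infty}\frac 1n\log E^{\pi,\omega}_0\Big[\exp\sum_{k=0}^{n-1}f(\tau_{X_k}\omega,X_{k+1}-X_k)\Big]
\end{equation*}
and its identification with $\mathfrak I^\star(f)$. Once this is done the rate function is automatically the double Fenchel--Legendre transform $\mathfrak I^{\star\star}$; convexity and compactness of level sets then follow from the fact that a convex, lower semicontinuous function on a compact Polish space has compact level sets.

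The lower bound $\Lambda(f)\ge\mathfrak I^\star(f)$ I would obtain by a change-of-measure/entropy argument. Given $\mu\in\Mcal_1^\star$, Lemma \ref{onetoone} supplies the tilted Markov kernel $\tilde\pi^\mu_\omega(0,e)=\d\mu(\omega,e)/\d(\mu)_1(\omega)$, which admits $(\mu)_1\ll\P_0$ as its invariant measure. The ergodic theorem for the environment chain under the associated path measure identifies $\mu$ as the almost sure limit of $\mathcal L_n$, and Jensen's inequality applied to the Radon--Nikodym derivative between the tilted and the original walks yields
\begin{equation*}
\liminf_{n\to\infty}\frac 1n\log E^{\pi,\omega}_0\big[e^{n\langle f,\mathcal L_n\rangle}\big]\ge\langle f,\mu\rangle-\mathfrak I(\mu)\qquad\text{for $(\mu)_1$-a.e.\ $\omega$}.
\end{equation*}
Since the liminf is shift-invariant and therefore $\P_0$-almost surely constant by ergodicity of $\P_0$, and since $(\mu)_1\ll\P_0$ puts positive $\P_0$-mass on the set where the bound holds, the inequality is promoted to a quenched statement for $\P_0$-a.e. $\omega$. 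Optimizing over a countable weakly dense family in $\Mcal_1^\star$ gives the desired bound.

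The matching upper bound $\Lambda(f)\le\mathfrak I^\star(f)$ is the main obstacle, and is the step that genuinely invokes the abstract's machinery of \emph{gradient functions}, following Kosygina--Rezakhanlou--Varadhan \cite{KRV06} and its discrete adaptations by Rosenbluth \cite{R06} and Yilmaz \cite{Y08}. The idea is to construct, for each $f$, a shift-covariant cocycle $G_f:\Omega_0\times\mathbb U_d\to\R$ supported on open bonds -- a corrector -- such that
\begin{equation*}
\log\sum_{e\in\mathbb U_d}\pi_\omega(0,e)\exp\bigl(f(\omega,e)+G_f(\omega,e)\bigr)\le\mathfrak I^\star(f)\qquad\P_0\text{-a.s.}
\end{equation*}
The cocycle property lets $\sum_{k<n}G_f(\tau_{X_k}\omega,X_{k+1}-X_k)$ telescope along any trajectory of $X$ as $u(X_n,\omega)-u(0,\omega)$ for an associated corrector $u$, and a Jensen/martingale estimate on the Feynman--Kac transfer operator then reduces $\Lambda_n(f,\omega)$ to the variational value $\mathfrak I^\star(f)$ up to a boundary term $u(X_n,\omega)/n$. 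Existence of $G_f$ in the appropriate Banach space of $L^q$-cocycles comes from Hahn--Banach/convex duality applied to $\mathfrak I^\star$. The genuine difficulty lies in controlling $u$ under non-ellipticity of $\pi_\omega$: the logarithmic moment assumptions of \cite{R06,Y08} fail irreparably because $\pi_\omega(0,e)=0$ on closed bonds, and $u(X_n,\omega)/n\to 0$ must instead be extracted from the geometry of $\mathcal C_\infty$, through sublinearity of the corrector along random walk paths, quasi-isometry of chemical and Euclidean metrics, and the ergodic theorem for the environment chain on the infinite cluster. It is here that percolation-theoretic input replaces the classical ellipticity hypothesis.
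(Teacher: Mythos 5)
Your overall architecture coincides with the paper's: first prove the limiting logarithmic moment generating function (Theorem \ref{thmmomgen}), then deduce the LDP via the inverse Varadhan/Bryc theorem on the compact space $\Mcal_1(\Omega_0\times\mathbb U_d)$ (the paper cites Theorem 4.5.3 of \cite{DZ98}); the lower bound via tilting to $(\tilde\pi,\phi)\in\mathcal E$, the ergodic theorem for the environment chain, and Jensen's inequality for the Radon--Nikodym derivative; the upper bound via a corrector $G_f$ satisfying $\log\sum_e\pi_\omega(0,e)\exp(f(\omega,e)+G_f(\omega,e))\le\mathfrak I^\star(f)$, telescoped along the walk, with sublinearity of the potential extracted from percolation geometry.

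The one place where your sketch departs from, and is weaker than, the paper's argument is the regularity you ask of $G_f$. You say $G_f$ should live in an appropriate space of $L^q$-cocycles and that its existence comes from Hahn--Banach applied to $\mathfrak I^\star$. But the paper's crucial structural point -- and the reason the whole scheme works \emph{without} the moment conditions of \cite{R06,Y08} -- is that the correctors lie in the class $\mathcal G_\infty$ of \emph{uniformly bounded} gradient functions. This boundedness is what makes the geometric argument close: in Theorem \ref{sublinearthm} the sub-linearity on average (Lemma \ref{averagesublinear}) is upgraded to uniform sub-linearity on the cluster ball by combining it with the chemical-distance estimate (Lemma \ref{chemdist}) via the inequality $|\Psi(\omega,x)-\Psi(\omega,y)|\le M\,\d_{\mathrm{ch}}(x,y)$ with $M=\|G\|_\infty$. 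With only an $L^q$ bound that pointwise estimate is unavailable, and the chemical-distance input does not help. The paper establishes the $L^\infty$ bound in Step~2 of Proposition \ref{propequal}, not by abstract duality, but by the elementary observation that $-\log\pi_\omega(0,e)\le\log(2d)$ on $\{\omega(e)=1\}$ together with the reversibility trick (applying \eqref{equiv5} to the reversed edge $-e$ and using $H(\omega,-e)=-H(\tau_{-e}\omega,e)$) to get a matching lower bound on $G_{k,\eps}$. You should make this boundedness step explicit: it is not a technical refinement but the pivot that replaces ellipticity.

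A smaller remark: you describe sub-linearity ``along random walk paths,'' but the paper needs and proves the stronger statement $\max_{|x|\le n,\,x\in\mathcal C_\infty}|\Psi(\omega,x)|/n\to0$, uniformly over the cluster ball, which is what feeds into the bound \eqref{ub2} on arbitrary nearest-neighbor occupied paths starting from the origin. Along-path sub-linearity obtained from the ergodic theorem alone would not suffice, because the upper bound must hold for every realization of the walk up to time $n$, not just typical ones. Otherwise your lower bound reasoning (ergodicity plus Jensen, then upgrading to $\P_0$-a.s.\ via equivalence $\mathbb Q\sim\P_0$ from Theorem \ref{ergodicthm}) matches the paper's Lemma \ref{lemmalb} and Corollary \ref{corlb}.
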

In other words, for every closed set $\mathcal C\subset \Mcal_1(\Omega_0 \times  \mathbb U_d)$, every open set $\mathcal G\subset \Mcal_1(\Omega_0 \times  \mathbb U_d)$ and $\P_0$- almost every $\omega\in \Omega_0$,
\begin{equation}\label{ldpub}
\limsup_{n\to\infty} \frac 1n \log \P^{\pi,\omega}_0 \big(\mathcal L_n\in \mathcal C\big) \leq -\inf_{\mu\in \mathcal C} \mathfrak I^{\star\star}(\mu),
\end{equation}
and 
\begin{equation}\label{ldplb}
\limsup_{n\to\infty} \frac 1n \log \P^{\pi,\omega}_0 \big(\mathcal L_n\in \mathcal G\big) \geq -\inf_{\mu\in \mathcal G} \mathfrak I^{\star\star}(\mu).
\end{equation}
\begin{remark}
The functional $\mathfrak I$ is convex on $\Mcal_1(\Omega_0\times\mathbb U_d)$, but fails to be lower semicontinuous, see Lemma \ref{nonlsc}. Hence, $\mathfrak I^{\star\star}\ne \mathfrak I$.
\end{remark}

Theorem \ref{thmlevel2} is an easy corollary to the existence of the limit
$$
\lim_{n\to\infty} \frac 1n \log E^{\pi,\omega}_0 \big\{\exp\{n\big \langle f, \mathcal L _n\big\rangle\big\}\big\}
=\lim_{n\to\infty} \frac 1n \log E^{\pi,\omega}_0 \bigg\{\exp\bigg( \sum_{k=0}^{n-1}f\big(\tau_{X_k}\omega, X_k-X_{k-1}\big)\bigg)\bigg\},
$$
for every continuous, bounded function $f$ on $\Omega_0 \times  \mathbb U_d$ and the symbol $\langle f, \mu\rangle$ denotes, in this context, the integral $\int_{\Omega_0} \d \P_0 (\omega) \sum_{e\in \mathbb U_d} f(\omega,e) \d \mu(\omega,e)$.
We formulate it as a theorem.

\begin{theorem}[Logarithmic moment generating functions]\label{thmmomgen}
For $d\geq 2$, $p> p_c(d)$ and every continuous and bounded function $f$ on $\Omega_0 \times  \mathbb U_d$,
$$
\lim_{n\to\infty} \frac 1n \log E^{\pi,\omega}_0 \bigg\{\exp\bigg( \sum_{k=0}^{n-1}f\big(\tau_{X_k}\omega, X_k-X_{k-1}\big)\bigg)\bigg\} = \sup_{\mu\in \Mcal_{1}^{\star}} \big\{\langle f,\mu\rangle- \mathfrak I(\mu)\big\} \quad\P_0-\mbox{a.s.}
$$
\end{theorem}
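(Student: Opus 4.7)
Write $\Lambda(f)$ for the right-hand side variational expression. The plan is to establish the limit by proving matching lower and upper bounds on $\tfrac{1}{n}\log E^{\pi,\omega}_0\{\exp(\sum_{k} f(\tau_{X_k}\omega, X_{k+1}-X_k))\}$, both valid $\P_0$-almost surely. Theorem \ref{thmlevel2} is then obtained from Bryc's inverse to Varadhan's lemma (using the tightness of $\Mcal_1(\Omega_0\times\mathbb U_d)$ in the weak topology), and the biconjugate $\mathfrak I^{\star\star}$ appears as the rate function precisely because $\mathfrak I$ is not lower semicontinuous.

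For the lower bound I would tilt the reference chain. Given $\mu\in\Mcal_1^\star$ with $\mathfrak I(\mu)<\infty$, put $q_\mu(\omega,e):=d\mu(\omega,e)/d(\mu)_1(\omega)$; by Lemma \ref{onetoone} this is a Markov kernel on $\Omega_0$ whose environment process admits $(\mu)_1\ll\P_0$ as an invariant, ergodic law. Jensen's inequality applied to the likelihood ratio between the $\pi$- and $q_\mu$-chains yields
$$
\tfrac{1}{n}\log E^{\pi,\omega}_0\Big\{\exp\Big(\sum_{k=0}^{n-1} f\Big)\Big\} \;\ge\; \tfrac{1}{n}\,E^{q_\mu,\omega}_0\Big[\sum_{k=0}^{n-1} f(\tau_{X_k}\omega,X_{k+1}{-}X_k) - \sum_{k=0}^{n-1}\log\tfrac{q_\mu(\tau_{X_k}\omega,X_{k+1}{-}X_k)}{\pi_\omega(X_k,X_{k+1}{-}X_k)}\Big].
$$
Birkhoff applied to the ergodic environment chain under $q_\mu$ drives the right-hand side to $\langle f,\mu\rangle-\mathfrak I(\mu)$, and a countable dense family in $\Mcal_1^\star$ delivers $\liminf_n \ge \Lambda(f)$ simultaneously for $\P_0$-almost every $\omega$.

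For the upper bound I would adopt the convex-analytic / Hamilton--Jacobi scheme of Kosygina--Rezakhanlou--Varadhan \cite{KRV06}, Rosenbluth \cite{R06} and Yilmaz \cite{Y08}. Fixing $\lambda>\Lambda(f)$, duality against $\mathfrak I$ suggests producing a measurable gradient form $F_\lambda:\Omega_0\times\mathbb U_d\to\R$ satisfying
$$
\log \sum_{e\in\mathbb U_d}\pi_\omega(0,e)\,e^{f(\omega,e)+F_\lambda(\omega,e)} \;\le\; \lambda,\qquad \P_0\mbox{-a.s.},
$$
together with a measurable $g_\lambda:\Omega_0\to\R$ realizing $F_\lambda$ as a cocycle $g_\lambda(\tau_e\omega)-g_\lambda(\omega)$ on the cluster. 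Iterating the pointwise bound along the SRWPC gives
$$
E^{\pi,\omega}_0\Big\{\exp\Big(\sum_{k=0}^{n-1} f\Big)\Big\} \;\le\; e^{n\lambda}\,E^{\pi,\omega}_0\Big\{e^{g_\lambda(\tau_{X_n}\omega)-g_\lambda(\omega)}\Big\},
$$
so the bound reduces to showing $g_\lambda(\tau_{X_n}\omega)-g_\lambda(\omega)=o(n)$ along $\P_0$-typical trajectories of the walk.

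The construction of $F_\lambda$ and the sublinearity of the associated $g_\lambda$ is where I expect the main obstacle. The analyses in \cite{R06,Y08} depend on the moment hypothesis $\E[|\log\pi_\omega(0,e)|^p]<\infty$ for some $p>d$, which controls the corrector via a mean ergodic theorem in $L^p$ on gradient forms; in SRWPC $\pi_\omega(0,e)$ vanishes on a positive-density set, so $\log\pi$ is not even integrable and that machinery collapses. I would instead realize $F_\lambda$ as a minimizer of the dual variational problem directly in an $L^2$-closure of gradients of bounded local functions, using convex analytic compactness (this is the \emph{structurally new} gradient function flagged in the abstract, distinct from the Kipnis--Varadhan corrector), and then recover the sublinearity $g_\lambda(X_n)/n\to 0$ by combining ergodicity of the environment-from-the-particle chain with quantitative connectivity / renormalization estimates for the supercritical cluster. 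Substituting cluster geometry for the missing moment bound to secure the sublinear growth of the corrector is the technical heart of the argument.
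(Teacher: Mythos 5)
Your proposal follows the same high-level route as the paper: lower bound by tilting the chain and applying Birkhoff to the ergodic environment process, upper bound by constructing a corrector/gradient form and feeding its sublinearity into the moment-generating-function estimate, and matching the two bounds by a convex-analytic (min-max) computation. You have also correctly located the obstruction in the approach of \cite{R06,Y08}, namely the moment hypothesis on $\log\pi$. However, there is a genuine gap in your corrector construction: you propose to take $F_\lambda$ as a minimizer in ``an $L^2$-closure of gradients of bounded local functions,'' and an $L^2$ bound is not enough to close the sublinearity argument.

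The paper's key observation is that in the SRWPC case the variational analysis automatically produces a \emph{uniformly $L^\infty$-bounded} gradient, not just an $L^2$ one. Since on an open edge $\pi_\omega(0,e)\geq \tfrac{1}{2d}$, the finite-level approximants $G_{k,\eps}$ obtained from the min-max/Lagrange computation (see \eqref{gradcorrect}) satisfy the two-sided essential-sup bound $\mathrm{ess\,sup}\,|G_{k,\eps}(\cdot,e)|\leq \log(2d)+\|f\|_\infty+\overline H(f)+\eps$ uniformly in $k$ (equations \eqref{equiv6}--\eqref{unifbd}), so the weak limit $G_\eps$ lands in the class $\mathcal G_\infty$ of \emph{bounded} mean-zero closed-loop gradients. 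This boundedness is what the sublinearity proof (Theorem~\ref{sublinearthm}) actually exploits: starting from the averaged sublinearity of $\Psi$ on the cluster (Lemma~\ref{averagesublinear}, from \cite{BB07}) one upgrades to a pointwise bound by picking a good reference point $y$ near $x$ with $|\Psi(\omega,y)|\le\eps n$ and then using the Antal--Pisztora chemical-distance estimate together with the elementary inequality
\begin{equation*}
|\Psi(\omega,x)-\Psi(\omega,y)|\;\le\;\mathrm d_{\mathrm{ch}}(x,y)\cdot \mathrm{ess\,sup}_{\omega}\,|G| ,
\end{equation*}
which requires $\|G\|_\infty<\infty$. Without that, the chemical-distance control does not convert averaged smallness of $\Psi$ into pointwise smallness, and the ``glue'' step of your sketch fails. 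So your plan is right in spirit, but you should not seek the corrector in an $L^2$-closure: the non-ellipticity of SRWPC actually \emph{helps} here because the effective Hamiltonian is bounded, which forces $G\in L^\infty$ and hands you exactly the control the moment hypothesis of \cite{R06,Y08} was supplying.
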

We will first prove Theorem \ref{thmmomgen} and deduce Theorem \ref{thmlevel2} directly.

Note that via the contraction map $\xi: \Mcal_1(\Omega_0 \times  \mathbb U_d) \longrightarrow \R^d$, 
$$
\mu\mapsto \int_{\Omega_0} \sum_e \, e\,\d\mu(\omega,e),
$$
we have $\xi(\mathcal L_{n})= \frac{X_n-X_0}n= \frac {X_n} n$. Our second main result is the following corollary to Theorem \ref{thmlevel2}.
\begin{cor}[Quenched LDP for the mean velocity of SRWPC]\label{thmlevel1}
Let $d\geq 2$ and $p>p_c(d)$. Then the distributions $P^{\pi,\omega}_0\big(\frac {X_n}n\in \cdot\big)$ satisfies a large deviation principle 
with a rate function 
\begin{equation}\label{level1rate}
\begin{aligned}
 J(x)&= \inf_{\mu\colon \xi(\mu)=x} \mathfrak I(\mu)\qquad x\in\R^d.
\end{aligned}
\end{equation}
\end{cor}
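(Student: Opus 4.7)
My plan is to apply the contraction principle to Theorem \ref{thmlevel2}. The map $\xi:\Mcal_1(\Omega_0\times\mathbb U_d)\to\R^d$ defined by $\xi(\mu)=\int_{\Omega_0}\sum_{e\in\mathbb U_d} e\,\d\mu(\omega,e)$ is linear and continuous in the weak topology, since its components integrate the bounded continuous function $(\omega,e)\mapsto e_i$ on the compact space $\Omega_0\times\mathbb U_d$. A telescoping computation gives $\xi(\mathcal L_n)=(X_n-X_0)/n=X_n/n$, so Theorem \ref{thmlevel2} immediately produces, for $\P_0$-a.e.\ $\omega$, a quenched LDP for $P^{\pi,\omega}_0(X_n/n\in\cdot)$ on $\R^d$ with rate function $\tilde J(x)=\inf_{\mu:\xi(\mu)=x}\mathfrak I^{\star\star}(\mu)$.

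The core task is then to identify $\tilde J$ with $J(x)=\inf_{\mu:\xi(\mu)=x}\mathfrak I(\mu)$ from the statement. Since $\mathfrak I^{\star\star}\leq\mathfrak I$, the bound $\tilde J\leq J$ is free, so the substance is the reverse comparison, which I would obtain by Legendre duality. Varadhan's lemma applied to the LDP just obtained gives $\tilde J^\star(t)=\lim_n\tfrac1n\log E^{\pi,\omega}_0[\exp\langle t,X_n\rangle]$ for $t\in\R^d$, while Theorem \ref{thmmomgen} applied to the bounded continuous function $f(\omega,e)=\langle t,e\rangle$ computes the same limit as $\sup_{\mu\in\Mcal_1^\star}\{\langle t,\xi(\mu)\rangle-\mathfrak I(\mu)\}$, which by rewriting the outer supremum over the fibers of $\xi$ equals $J^\star(t)$. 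Hence $\tilde J^\star=J^\star$, and taking conjugates once more yields $\tilde J=\tilde J^{\star\star}=J^{\star\star}$, the first equality using that $\tilde J$ is already convex and lower semicontinuous (as the infimum of the convex lsc rate function $\mathfrak I^{\star\star}$ along the fibers of a continuous linear map).

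To close the argument one verifies $J=J^{\star\star}$. Convexity of $J$ is a direct consequence of the convexity of $\mathfrak I$ (Remark after Theorem \ref{thmlevel2}) and the linearity of $\xi$: a convex combination $\lambda\mu_1+(1-\lambda)\mu_2$ with $\xi(\mu_i)=x_i$ lies on the fiber over $\lambda x_1+(1-\lambda)x_2$, and $\mathfrak I$ respects that combination. For lower semicontinuity, the effective domain of $J$ is contained in the compact set $[-1,1]^d$ by the nearest-neighbor step constraint, so standard finite-dimensional convex analysis gives continuity on its relative interior and reduces the identification $J=J^{\star\star}$ to a check at the boundary. I expect this last step to be the main technical point, since Lemma \ref{nonlsc} warns that $\mathfrak I$ itself is not lsc on $\Mcal_1^\star$; the argument must use that the defect of lower semicontinuity in $\mathfrak I$ is averaged out when projected through $\xi$, which is plausible because many distinct environment measures on a given fiber contribute to the same mean velocity.
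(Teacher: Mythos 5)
Your proof follows the same route as the paper's: contraction from Theorem~\ref{thmlevel2} gives an LDP with good rate function $\tilde J(x)=\inf_{\xi(\mu)=x}\mathfrak I^{\star\star}(\mu)$, and one must then identify $\tilde J$ with $J$. You go beyond the paper's terse justification (``easy to check using convexity of $\mathfrak I$ and $\mathfrak I^{\star\star}$'') by making the duality explicit: $J^\star=\tilde J^\star$ follows either from $(\mathfrak I^{\star\star})^\star=\mathfrak I^\star$ or, as you do, by rewriting the supremum over fibers of $\xi$ and matching against Theorem~\ref{thmmomgen}; and since $\tilde J$ is a convex good rate function it equals its biconjugate, so $\tilde J=J^{\star\star}$. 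This pins down exactly what convexity delivers, and no more. The step $J=J^{\star\star}$ that you correctly flag as the crux is also the unfilled gap in the paper's own one-line proof: $J$ is convex with $\mathrm{dom}\,J\subset[-1,1]^d$, hence continuous on the relative interior of its domain, but lower semicontinuity at the relative boundary is not automatic for a finite-dimensional convex function and is not verified by either argument. What the duality establishes is only $\tilde J=\bar J$, the lsc hull of $J$; so \eqref{level1rate} holds literally only if $J$ is itself lsc on the boundary. Closing this would require an argument specific to $\Mcal_1^\star$ showing that a near-minimizing sequence with $\xi(\mu_n)\to x$ can be modified to sit on the exact fiber $\xi^{-1}(x)$ without raising the entropy, which is precisely the ``averaging over the fiber'' heuristic you anticipate. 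In short, your approach matches the paper's and is more explicit and more candid about where the real work lies.
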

\begin{remark} Note that Corollary \ref{thmlevel1} has been obtained by Kubota (\cite{K12}) for the SRWPC (see also the results of J.C. Mourrat (\cite{M12}) for similar work in the context of random walks in random potential), though with transition probabilities slightly different from ours (Kubota
considered a random walk on the cluster which picks a neighbor at random and if the corresponding
edge is occupied, the walk moves to its neighbor. If the edge is vacant, the move is suppressed, i.e., the walk is {\it{lazy}}. Note that in our model the random walk
takes no pauses, i.e., the random walk is {\it{agile}}). However, his method of proof as well as the description 
of the rate function $J$ is completely different from ours. See section 2.1 for a comparison of results and proof techniques.
\end{remark}


\subsection{Literature review} 


In $d=1$, Greven and den Hollander (\cite{GdH94}) derived a quenched large deviation principle for the mean velocity of a random walk in i.i.d. random environment based on techniques
from branching processes and obtained a formula for the rate function. Indepedently, using passage times on $\Z$, Comets, Gantert and Zeitouni (\cite{CGZ00}) derived the LDP (also in the {\it{annealed}} and {\it{functional}} form)
for stationary and ergodic environments. For $d\geq 1$, Zerner (\cite{Z98}, see also Sznitman (\cite{S94}) for Brownian motion in a Poissonian potential) proved a quenched LDP under the assumption that
$-\log \pi(0,e)$ has finite $d$-th moment and that there exists arbitrary large regions in the lattice where the local drifts $\sum_e e \pi(0,e)$ ``points towards the origin'' (the {\it{nestling property}}).  His method is based on proving 
shape theorems and deriving the LDP, the driving force here being the sub-additive ergodic theorem. Invoking the sub-additivity more directly, Varadhan (\cite{V03}) 
proved a quenched LDP without assuming the nestling assumption, but for uniformly elliptic environments. 

Kosygina, Rezakhanlou and Varadhan (\cite{KRV06}) derived a novel method for proving quenched LDP using the {\it{environment seen from the particle}} in the context
of a diffusion with a random drift assuming some regularity conditions on the drift.
This method goes parallel to {\it{quenched homogenization of random Hamilton- Jacobi- Bellman equations}}, see section \ref{HJB} for a review. Rosenbluth (\cite{R06}) 
invoked this theory to multidimensional random walks in random environments and obtained a rate function given by the dual
of the {\it{effective Hamiltonian}} which admits a {\it{formula}}. The regularity assumption (\cite{KRV06}) on the Hamiltonian under which homogenization takes place (or quenched large deviation principle
hold) now translates to the assumption that $-\log \pi (0,e)$ has finite $d+ \eps$ moment, $\eps>0$, of Rosenbluth (\cite{R06}). Under this moment assumption, Yilmaz extended this to 
a level- 2 quenched LDP (as in Theorem \ref{thmlevel2}) and subsequently Rassoul-Agha and Sepp\"al\"ainen (\cite{RS11}) proved a level-3 (process level) LDP getting variational formulas for the corresponding
rate functions. This method has been further exploited for studying quenched LDP and {\it{free energy}} for (directed and non-directed) random walks in a random potential $V=- \log \pi$, see Rassoul-Agha, Sepp\"al\"ainen and Yilmaz (\cite{RSY13}) and results concerning {\it{log-gamma polymers}} of Georgiou, Raggoul-Agha, Sepp\"al\"ainen and Yilmaz (\cite{GRSY13}) (see also \cite{RSY14} and \cite{GRS14} for related models).
All these results, though seeing significant achievements, work only under the standing assumption $V=-\log \pi \in L^p(\P)$ for $p>d$ and do not cover the case $V= \infty$, pertinent to the case of a random walk on a supercritical
percolation cluster we are interested in. 

As mentioned before, Kubota (\cite{K12}), based on the method of Zerner ({\cite{Z98}) (see also Mourrat (\cite{M12}) proved a quenched LDP for the mean velocity of the random walk $\frac {X_n} n$
on a supercritical percolation cluster, which is very close to Corollary \ref{thmlevel1}. He also used sub-addtivity and overcame the lack of the moment criterion of Zerner
by using classical results about the geometry of the percolation. This way he obtained a rate function which is convex and is the given by the Legendre transform of the {\it{Lyapunov exponents}} derived by Zerner {\cite{Z98}). 
However, using the sub-additive ergodic theorem one does not get a satisfactory {\it{formula}} for the rate function, nor does this method seem amenable for deriving
a {\it{level 2}} quenched LDP as in Theorem \ref{thmlevel2}. 

\subsection{Outline.}

Let us now turn to a sketch of the proof of Theorem \ref{thmmomgen} whose guiding philosophy is based on the ideas of \cite{KRV06}. 
A rough idea is to first work on the space of environments to obtain certain ergodic properties to derive the lower bound, 
which, by variational techniques can be shown to be an upper bound as well, provided that a certain class of gradient functions, which show up naturally in the variational analysis, have a sub-linear growth at infinity. Controlling this growth (which is crucial for the upper bound) requires certain regularity properties (see \eqref{diffassump}) of the random drift (or the moment conditions of \cite{R06} and \cite{Y08}) which we necessarily drop for the SRWPC
and prove the sub-linearity of the gradient functions using ergodicity and geometric properties of percolation. An upshot of the aforementioned variational analysis for our case is the boundedness of the gradient functions on the cluster (our {\it{effective Hamiltonian}} does not grow), a key information for proving their sub-linear growth property.


We end this section summarizing the organization of the rest of the article. Section 2 reviews the approach of \cite{KRV06} shortly. Although this is well-known, we underline
its main ideas to make our proof a bit more transparent and put it into context. Sections 3, 4 and 5 are devoted to the proof of Theorem \ref{thmmomgen}: 
Section 3 proves the lower bound using some ergodicity arguments of Markov chains on environments, where arbitrary Markov kernels possibly admit zero transition probabilities. Section 4 introduces
a certain class of {\it{gradients}} or {\it{correctors}} (this should not be confused with the classical {\it{Kipnis- Varadhan corrector}}, see Remark \ref{KVremark} below), which
 admit a sub-linear growth on the cluster at infinity, which is the main step for deriving the upper bound in section 5. Theorem \ref{thmlevel2} and Corollary \ref{thmlevel1} are easily deduced from Theorem \ref{thmmomgen} in section 6.
 
 \begin{remark}\label{KVremark}
In our proof, as mentioned, a class $\mathcal G_\infty$ of gradient functions show up naturally in the context of variational analysis (see section 4 and section 5.2). These objects
 share close similarities to {\it{the Kipnis- Varadhan corrector}} which is a central object of interest for reversible random motions in random media. Particularly for SRWPC this is crucial for proving a central limit
 theorem (\cite{SS04}, \cite{MP07}, \cite{BB07})-- the corrector expresses the distance between the random walk and a (harmonic) embedding of the cluster in $\R^d$ where the random walk becomes a martingale. 
 Finer quantitative questions (for example, existence of all moments) are of interest, see related work of Lamacz, Neukamm and Otto (\cite{LNO13}) on a similar model
 of percolation with all bonds parallel to the direction $e_1$ being declared open. However, our functions in class $\mathcal G_\infty$ are structurally different from the Kipnis- Varadhan corrector. Though
 they share similar properties as {\it{gradients}}, objects in class $\mathcal G_\infty$, in particular, miss the above mentioned 
 {\it{harmonicity}} of the Kipnis- Varadhan corrector. Large deviation (lower) bounds are based on a certain {\it{tilt}} which spoils the inherent {\it{reversibility}} of the model, a crucial base of Kipnis- Varadhan theory.
 \end{remark}


\section{Input from quenched homogenization: Main idea of our proof}\label{HJB}


As mentioned, we follow \cite{KRV06} and work with a diffusion in random drift. 
The notation used in this section should be treated independently from other parts of this article.
Let $(\Omega, \mathcal F, \P)$ be a probability space on which  $\R^d$ acts as an additive group of translations. Let 
$b: \Omega \longrightarrow \R^d$ be a nice vector field and let us consider a random diffusion $(X_t)_{t\geq 0}$ on $\R^d$ whose infinitesimal generator
is given by 
$$
\big(\mathcal A^{\ssup \omega}_b u\big)(x)= \frac 12 \Delta u(x)+ \big\langle b_\omega(x), \nabla u(x)\big\rangle,
$$
where the drift $b_\omega(x)= b(\tau_x \omega)$ is generated by the action of $\{\tau_x\}_{x\in \R^d}$ on $\omega$. Let $P^{b, \omega}_0$ be the corresponding
Markovian measure starting at $0\in \R^d$ at time $0$ for each $\omega\in \Omega$. We would like to have a large deviation principle
for the distribution $P^{b, \omega}_0 \big( \frac {X(t)} t\in \cdot\big)$ almost surely with respect to $\P$.

The first step is to ``lift" the diffusion $(X_t)_{t\geq 0}$ to the {\it{environment process}}
$\overline\omega_t=\tau_{X_t}(\omega)$ 
taking values on $\Omega$ with infinitesimal generator $\overline{\mathcal A}_{b}= \frac 12 \Delta+ b \cdot \nabla$
where $\nabla= \{\nabla_i\}_{i=1}^d$ is the gradient given by the generators
$$
\big(\nabla_i u\big)(\omega)= \lim_{\eps\to 0} \frac {u(\tau_{\eps \mathbf{e_i}} \omega)- u(\omega)} \eps \quad i=1,\dots, d,
$$
of the translation semigroup $\{\tau_x\}_{x\in \R^d}$. A key step is to find a probability measure which is invariant under $\overline {\mathcal A}$ 
and absolutely continuous with respect to $\P$, i.e., to find $\phi\in L^1(\P)$ with $\phi\geq 0, \, \int \phi \d \P=1$ such that $\phi$ satisfies 
$$
\frac 12 \Delta \phi= \nabla \cdot (b \phi).
$$
If we fancy that such a function $\phi$ exists, then the measure $\phi \d \P$ is ergodic (see Kozlov(\cite{K85}) and Papanicolau- Varadhan (\cite{PV81})). Hence, by the ergodic theorem,
$$
\lim_{t\to\infty} \frac 1t \int_0^t f(\overline \omega(s)) \d s= \int f(\omega) \phi(\omega) \d \P \qquad \P- \mbox{ almost surely},
$$
for any test function $f$ on $\Omega$. This also translates to an ergodic theorem on $\R^d$ for the stationary process $g(\omega, X_t)= f(\tau_{X_t}\omega)$:
$$
 \lim_{t\to\infty} \frac 1t \int_0^t g(\omega, X_s) \d s= \int f(\omega) \phi(\omega) \d \P \qquad P^{b,\omega}_0 - \mbox{ almost surely}.
 $$
If we write down the martingale problem $X_t= W_t+ \int_0^t b(\omega, X_s) \d s$,
where $(W_t)_{t\geq 0}$ is a standard Brownian motion, then we have the law of the large numbers 
 $$
\lim_{t\to \infty} \frac {X_t} t = \lim_{t\to\infty} \int_0^t b(\omega, X_s) \d s = \int b(\omega) \phi(\omega) \d \P,
$$
almost surely  with respect to $\P$ and $P^{b,\omega}$.

Unfortunately, for {\it{any arbitrary drift}} $b$, it is very difficult to find an invariant probability $\phi\d \P$. An easier task is 
to do the ``converse":  Start with a given probability density $\phi$ and find {\it{some $\tilde b$}} so that 
\begin{equation}\label{diffinvdensity}
\frac 12 \Delta \phi= \nabla\cdot (\tilde b\phi),
\end{equation}
i.e., $\phi\d \P$ is an invariant density for the generator $\frac 12 \Delta+ \tilde b\cdot \nabla$ (take for instance, for any $\phi>0$, $\tilde b= \frac{\nabla \phi} \phi$). This is indeed the task set we forth for getting the large deviation lower bound. 
Let $\mathcal E$ be the class of pairs $(\phi, \tilde b)$ so that the probability density $\phi$ satisfies \eqref{diffinvdensity}.
We {\it{tilt}} the original measure $P^{b, \omega}_0$ to $P^{\tilde b, \omega}_0$ for any such $(\phi, \tilde b)\in \mathcal E$. 
The {\it{cost}} for such a tilt is given by the relative entropy 
$$
\begin{aligned}
H\big( P^{\tilde b, \omega}\big| P^{b, \omega} \big)\big|_{\mathcal F_0^t}
&= E^{\tilde b, \omega}\bigg\{ \log\bigg(\frac{\d P^{\tilde b, \omega}}{\d P^{ b, \omega}}\bigg)\bigg|_{\mathcal F_0^t} \bigg\}\\
&=  E^{\tilde b, \omega}\bigg\{\frac 12 \int_0^t \big\| \tilde b\big(\omega(s)\big)- b\big(\omega(s)\big) \big\|^2 \d s\bigg\},
\end{aligned}
$$
by the Cameron- Martin- Girsanov formula (here $\|\cdot\|$ denotes the Euclidean norm in $\R^d$). We remark that for any such pair $(\tilde b,\phi) \in \mathcal E$, $\phi \d \P$ is ergodic 
for the environment process with generator $\frac 12 \Delta+ \tilde b\cdot \nabla$ and hence the ergodic theorem on 
$\Omega$, again by stationarity, translates to the ergodic theorem on $\R^d$, implying 
\begin{equation}\label{differgodic}
\begin{aligned}
&\frac 1t E^{\tilde b, \omega}\bigg\{\frac 12 \int_0^t \big\| \tilde b\big(\omega(s)\big)- b\big(\omega(s)\big) \big\|^2 \d s\bigg\} \longrightarrow\frac 12 \int \|\tilde b(\omega)- b(\omega)\|^2 \phi(\omega) \d \P \quad \P- \mbox{a.s.}
\\
&\mbox{and}\\
&\frac {X_t} t \longrightarrow \int \tilde b(\omega) \phi(\omega) \d \P,
\end{aligned}
\end{equation}
almost surely  with respect to $\P$ and $P^{\tilde b,\omega}$. We fix $\theta\in \R^d$.Then the measure tilting argument, combined with \eqref{differgodic}, leads to the lower bound
\begin{equation}\label{difflb}
\begin{aligned}
\liminf_{t\to\infty} \frac 1t \log E^{b,\omega}\big\{ \e^{\langle \theta, X_t\rangle}\big\} &\geq\sup_{x\in \R^d} \big\{ \langle \theta, x\rangle - I(x)\big\}\\
&= \sup_{(\tilde b, \phi)\in \mathcal E}\bigg\{\int \d \P\phi \bigg(\langle \theta, \tilde b\rangle- \frac 12  \|\tilde b(\omega)- b(\omega)\|^2\bigg)\bigg\}\\
&=: \overline H(\theta),
\end{aligned}
\end{equation}
where
$$
I(x)= \inf_{\heap{(\tilde b, \phi)\in \mathcal E}{\E(\tilde b \phi)=x}} \frac 12 \int \|\tilde b(\omega)- b(\omega)\|^2 \phi(\omega) \d \P.
$$
The desired large deviation principle would follow once we prove the corresponding upper bound 
\begin{equation}\label{diffub}
\begin{aligned}
\limsup_{t\to\infty} \frac 1t \log E^{b,\omega}\big\{ \e^{\langle \theta, X_t\rangle}\big\} &\leq \overline H(\theta),
\end{aligned}
\end{equation}
which contains the heart of the argument. In this context \eqref{diffinvdensity} has an important consequence: For
any suitable test function $g$ on $\Omega$,
$$
\frac 12 \int \d \P\phi \bigg\{ \Delta g + \big\langle \tilde b, \nabla g \big\rangle\bigg\}
\begin{cases}
=0 \, \,\mbox{for all}\,\, g \,\, \mbox{if} \,\, (\tilde b, \phi)\in\mathcal E,
\\
\ne 0\,\, \mbox{for some}\,\, g \,\, \mbox{if} \,\, (\tilde b, \phi)\notin\mathcal E,
\end{cases}
$$
and hence, by taking constant multiples of the left hand side if $(\tilde b, \phi)\notin\mathcal E$,
$$
\inf_g \frac 12 \int \d \P\phi \bigg\{ \Delta g + \big\langle \tilde b, \nabla g \big\rangle\bigg\}=
\begin{cases}
0  \,\, \,\,\,\,\,\mbox{if} \,\, (\tilde b, \phi)\in\mathcal E,
\\
-\infty \,\, \mbox{else}.
\end{cases}
$$
Hence, we can rewrite
$$
\begin{aligned}
\overline H(\theta)
&=\sup_\phi\sup_{\tilde b}\inf_g \bigg\{\int \d \P\phi \bigg(\langle \theta, \tilde b\rangle- \frac 12 \|\tilde b(\omega)- b(\omega)\|^2+\frac 12  \big\{\Delta g + \big\langle \tilde b, \nabla g \big\rangle\big\}\bigg)\bigg\} \\
&\geq \inf_g \sup_\phi\int \d \P \phi \bigg\{ \frac 12 \Delta g+ \langle b, \theta+\nabla g\rangle + \frac 12 \|\theta + \nabla g\|^2\bigg\} \\
&= \inf_g \mathrm{ess}\sup_{\omega- \P}  \bigg\{ \frac 12 \Delta g(\omega)+ \langle b(\omega), \theta+\nabla g(\omega)\rangle + \frac 12 \|\theta + \nabla g(\omega)\|^2\bigg\},
\end{aligned}
$$
where the lower bound above is a result of an approximation argument, subsequent min-max theorems from convex analysis and Lagrange multiplier optimization for $\tilde b$. From this lower bound we would like
to get some ``function" $\tilde g$ on $\Omega$, which is a (sub) -solution to
$$
\frac 12 \Delta \tilde g(\omega)+ \langle b(\omega), \theta+\nabla \tilde g(\omega)\rangle + \frac 12 \|\theta + \nabla \tilde g(\omega)\|^2 \leq \overline H(\theta) \quad \P- \mbox{almost surely}.
$$
It turns out that although $\tilde g$ does not exist as a {\it{function}}, under certain technical conditions (see below), its {\it{gradient}} $\widetilde G= \nabla \tilde g$ exists in $\R^d$ as a function
in some $L^p(\Omega)$ (i.e, has $p$-th moment) and satisfies $\E(\widetilde G)= \theta$ (i.e., has mean $\theta$), $\nabla \times \widetilde G=0$ in the sense that 
$\nabla_i \widetilde G_j= \nabla_j \widetilde G_i$ (i.e, satisfies {\it{closed loop}} condition) and $\P$- almost surely,
$$
\frac 12 \nabla. \widetilde G+ \big\langle b, \tilde G\big\rangle + \frac 12 \big\|\widetilde G\big\|^2 \leq \overline H(\theta),
$$
in the sense of distributions. Via the closed loop (and $p$-th moment) condition, one can define the function ( a {\it{corrector}}) $\widetilde \Psi(\omega, \cdot) \in W^{1,p}_{\mathrm{loc}}(\R^d)$ as
\begin{equation}\label{diffub1}
\widetilde\Psi(\omega,x)= \int_{0 \leadsto x} \langle \widetilde G, \d z(s)\rangle
\end{equation}
where $\widetilde \Psi(0,\omega)=0$ $\P$- almost surely and $z(s)$ is any path connecting $0$ and $x$. Now the large deviation
upper bound \eqref{diffub} follows by a maximum principle argument once we show that $\widetilde \Psi$ has a sub-linear growth
at infinity, i.e., 
\begin{equation}\label{diffsublin}
\|\widetilde\Psi\|_\infty= o(\|x\|) \qquad \mbox{as} \,\|x\|\to\infty
\end{equation}
$\P$- almost surely. This requires substantial technical work and one crucial assumption for this is, existence of $p>d$ and $q>p$ so that
\begin{equation}\label{diffassump}
c_1 \|\nabla u\|^p - c_2 \leq \frac 12 \|\nabla u\|^2+ \langle b, \nabla u\rangle \leq c_3 \|\nabla u\|^q - c_4 
\end{equation}
and the condition $p>d$ allows one to invoke Sobolev imbedding theorem to (locally) control $\|\widetilde \Psi\|_\infty$. Then \eqref{diffsublin} implies the upper bound
\begin{equation}\label{diffHamiltonfor}
 \inf_{\heap{\widetilde G: \nabla\times \widetilde G=0}{\E \widetilde G=\theta}} \mathrm{ess}\sup_{\P} \bigg\{ \frac 12 \|\widetilde G\|^2+ \langle b, \widetilde G\rangle + \frac 12 \nabla\cdot \widetilde G\bigg\} \leq \overline H(\theta)
\end{equation}
Combined with the lower bound \eqref{difflb}, this shows that, in fact equality holds above and proves the existence of the moment generating function (and hence the desired LDP).

We will end this formal discussion on diffusions with random drift by pointing out that the above arguments 
are in fact equivalent to a quenched homogenization effect: Getting the required function $\widetilde G$ as in \eqref{diffub1} is equivalent to getting 
an upper estimate on the solution of 
\begin{equation}\label{homog}
\begin{aligned}
&\partial_t u= \frac 12 \Delta u+ \frac 12 \|\nabla u\|^2+ \langle b, \nabla u\rangle \\
& u(0, x)= \langle \theta, x\rangle,
\end{aligned}
\end{equation}
On the other hand, the Cole-Hopf transform $v= \e^u$ solves
$$
\begin{aligned}
&\partial_t v= \frac 12 \Delta v+ \langle b, \nabla v \rangle \\
&v(0,x)= \e^{\langle \theta, x\rangle},
\end{aligned}
$$
which has a Feynman-Kac representation $v(t,x)= E^{b,\omega}_x \big\{ \e^{\langle \theta, X_t\rangle}\big\}$. Hence, studying the large deviation asymptotics (recall \eqref{difflb} and \eqref{diffub})
$\lim_{t\to\infty}\frac 1t \log v(t,0)$ is same as studying $\lim_{\eps\to 0} \eps \log u(1/\eps,0)=  \lim_{\eps\to 0} u_\eps(1,0)$ where $u_\eps$ solves the rescaled version of \eqref{homog}
$$
\begin{aligned}
&\partial_t u_\eps= \frac 12 \Delta u_\eps+ \frac 12 \|\nabla u_\eps\|^2+ \langle b(x/\eps,\omega), \nabla u\rangle \\
& u_\eps(0, x)= \langle \theta, x\rangle.
\end{aligned}
$$
The limit satisfies 
$$
\begin{aligned}
\partial_t u= \overline H(\nabla u)\\
u(0,x)= \langle \theta, x\rangle,
\end{aligned}
$$
where the {\it{effective Hamiltonian}} $\overline H$ again has the variational formula \eqref{diffHamiltonfor}.




\section{The ergodic theorem and the lower bound.}

In this section we start with the proof of the lower bound asymptotics in Theorem \ref{thmlevel2} and Theorem \ref{thmmomgen}. We need some input from the {\it{environment seen from the particle}}, which, with respect to a
suitably {\it{changed measure}}, possesses important ergodic properties.

\subsection{Markov chains on environments and the ergodic theorem.}
 
Recall that, given the transition probabilities $\pi$ from \eqref{pidef}, for $\P_0$- almost every $\omega\in \Omega_0$, the process $(\tau_{X_n}\omega)_{n\geq 0}$ is a Markov chain with transition kernel
$$
(R_\pi f)(\omega)= \sum_e \pi_\omega(0,e) f(\tau_e \omega),
$$
for every $f$ which is measurable and bounded. 

We need to introduce a class of transition kernels on the space of environments. We denote by $\widetilde \Pi$ the space of functions $\tilde\pi: \Omega_0\times \mathbb U_d\rightarrow [0,1]$  
which are measurable in $\Omega_0$, $\sum_{e\in \mathbb U_d} \tilde\pi(\omega,e)=1$ for almost every $\omega\in \Omega_0$ and for any $\omega\in \Omega_0$ and $e\in \mathbb U_d$, 
\begin{equation}\label{pitilde}
\tilde\pi(\omega,e)=0 \,\,\,\mbox{if and only if}\,\,\, \pi_\omega(0,e)=0.
\end{equation}
For any $\tilde\pi\in\widetilde\Pi$ and $\omega\in \Omega_0$, we define the corresponding quenched probability distribution of the Markov chain $(X_n)_{n\geq 0}$ by
\begin{equation}
\begin{aligned}
&P^{\tilde\pi,\omega}_0(X_0=0)=1\\
&P^{\tilde\pi,\omega}_x(X_{n+1}=x+e| X_n=x)= \tilde\pi(\tau_x\omega,e).
\end{aligned}
\end{equation}

With respect to any $\tilde\pi\in \widetilde \Pi$ we also have a transitional kernel 
$$
(R_{\tilde \pi} f)(\omega)= \sum_{e\in\mathbb U_d} \tilde\pi(\omega,e) f(\tau_e \omega),
$$
for every measurable and bounded $f$. 
For any measurable function $\phi\geq 0$ with $\int \phi \d \P_0=1$, we say that the measure $\phi\d \P_0$ is $R_{\tilde \pi}$-invariant, or simply $\tilde\pi$-invariant, if,
\begin{equation}\label{invdensity}
\phi(\omega)= \sum_{e\in\mathbb U_d} \tilde\pi\big(\tau_{-e}\omega, e\big) \phi\big(\tau_{-e}\omega\big).
\end{equation}
Note that in this case,
\begin{equation*}
\int f(\omega)\phi(\omega)d\P_0(\omega) = \int (R_{\tilde \pi} f) (\omega)\phi(\omega)d\P_0(\omega),
\end{equation*}
for every bounded and measurable $f$.

We denote by $\mathcal E$ such pairs of $(\tilde \pi,\phi)$, i.e.,
\begin{equation}\label{ergodicpair}
\mathcal E= \bigg\{ (\tilde \pi, \phi)\colon \, \tilde\pi\in \widetilde\Pi, \phi\geq 0, \E_0(\phi)=1, \, \phi\d \P_0 \,\mathrm{is}\, \,\tilde\pi-\,\mathrm{invariant}\bigg\}.
\end{equation}
We need an elementary lemma which we will be using frequently. Recall the set $\Mcal_1^\star$ from \eqref{relevantmeasures}.
\begin{lemma}\label{onetoone}
There is a one-to-one correspondence between the sets $\Mcal_1^\star$ and $\mathcal E$.
\end{lemma}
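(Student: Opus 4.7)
The natural correspondence is
$$
(\tilde\pi,\phi)\in\mathcal{E}\quad\longleftrightarrow\quad d\mu(\omega,e)=\tilde\pi(\omega,e)\phi(\omega)\,d\P_0(\omega)\in\Mcal_1^\star,
$$
with inverse given, for $\mu\in\Mcal_1^\star$, by $\phi:=d(\mu)_1/d\P_0$ and $\tilde\pi(\omega,e):=d\mu(\omega,e)/d(\mu)_1(\omega)$ (the Radon-Nikodym derivatives exist because $(\mu)_1\ll\P_0$ and $\mu(\omega,\cdot)\ll(\mu)_1$). I would verify that these maps are well-defined, land in the right spaces, and are mutually inverse.

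\textbf{Forward direction.} Given $(\tilde\pi,\phi)\in\mathcal{E}$, the identity $\sum_e\tilde\pi(\omega,e)=1$ gives $(\mu)_1=\phi\,d\P_0\ll\P_0$ immediately. The structural condition $\tilde\pi(\omega,e)>0\iff\pi_\omega(0,e)>0\iff\omega_e=1$ directly yields the positivity clause of \eqref{relevantmeasures}. The main point is the marginal identity $(\mu)_1=(\mu)_2$: tested against a bounded measurable $f$, the definition \eqref{marginals} of $(\mu)_2$ as the pushforward under $(\omega,e)\mapsto\tau_e\omega$ gives
$$
\int f\,d(\mu)_2=\sum_e\int f(\tau_e\omega)\tilde\pi(\omega,e)\phi(\omega)\,d\P_0(\omega)=\int (R_{\tilde\pi}f)(\omega)\,\phi(\omega)\,d\P_0(\omega),
$$
and this equals $\int f\phi\,d\P_0=\int f\,d(\mu)_1$ exactly by $\tilde\pi$-invariance of $\phi\,d\P_0$.

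\textbf{Inverse direction.} For $\mu\in\Mcal_1^\star$, the $\P_0$-a.s. positivity clause in \eqref{relevantmeasures} translates into $\tilde\pi(\omega,e)>0\iff\omega_e=1$, placing $\tilde\pi$ in $\widetilde\Pi$, and $\sum_e\tilde\pi(\omega,e)=1$ holds $(\mu)_1$-a.s., hence $\P_0$-a.s.\ on $\{\phi>0\}$; on the $\P_0$-null complement one may redefine $\tilde\pi$ arbitrarily in $\widetilde\Pi$. Reversing the computation above, the hypothesis $(\mu)_1=(\mu)_2$ yields $\int f\phi\,d\P_0=\int(R_{\tilde\pi}f)\phi\,d\P_0$ for every bounded measurable $f$, which is the integrated form of \eqref{invdensity}. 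Mutual inversion of the two constructions is tautological.

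\textbf{Main obstacle.} The only subtle point is passing between the pointwise invariance \eqref{invdensity} and its weak dual formulation, since the conditioned measure $\P_0$ is \emph{not} shift-invariant. This is handled by the change of variables $\omega\mapsto\tau_{-e}\omega$ restricted to the set where the edge between $0$ and $e$ is open: on this set both $\omega$ and $\tau_{-e}\omega$ lie in $\Omega_0$ (since the open edge witnesses $0\leftrightarrow\mathcal C_\infty$ jointly in both configurations), so the Radon-Nikodym factor $d\P_0(\tau_{-e}\omega)/d\P_0(\omega)$ collapses to $1$ by shift-invariance of the underlying product measure $\P$. The condition $\tilde\pi(\omega,e)>0\Rightarrow\omega_e=1$ in the definition of $\widetilde\Pi$ is precisely what confines the sum in \eqref{invdensity} to this favorable set, making the duality argument rigorous.
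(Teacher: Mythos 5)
Your proposal is correct and uses the same correspondence as the paper: $(\tilde\pi,\phi)\mapsto\tilde\pi\phi\,\d\P_0$ with inverse $\mu\mapsto\big(\d\mu/\d(\mu)_1,\,\d(\mu)_1/\d\P_0\big)$. The only difference is presentational: the paper verifies $(\mu)_1=(\mu)_2$ by substituting the pointwise invariance identity \eqref{invdensity} directly into $\d\mu$, whereas you test against bounded measurable $f$ and use the integrated form $\int f\phi\,\d\P_0=\int(R_{\tilde\pi}f)\phi\,\d\P_0$ (which the paper states as an immediate consequence of \eqref{invdensity}). Your ``Main obstacle'' paragraph about the change of variables $\omega\mapsto\tau_{-e}\omega$ under the non-shift-invariant $\P_0$ correctly identifies the step that the paper leaves implicit, and your resolution (restriction to configurations where the edge at the origin is open, so that membership in $\Omega_0$ is preserved and the factor $\1_{\Omega_0}/\P(\Omega_0)$ cancels) is the right one.
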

\begin{proof}
Given any $(\tilde\pi, \phi)\in \mathcal E$, we take
\begin{equation}\label{themap}
\begin{aligned}
\d \mu(\omega,e&)= \tilde\pi(\omega,e) \phi(\omega) \,\d \P_0\\
& = \tilde\pi(\omega,e)  \sum_{e: \tau_e\omega^\prime=\omega} \tilde\pi(\omega^\prime,e) \phi(\omega^\prime) \, \d \P_0.
\end{aligned}
\end{equation}
By \eqref{marginals}, $\P_0$-almost surely, 
$$
\begin{aligned}
\d(\mu)_1(\omega) &=\sum_{e\in \mathbb U_d} \d \mu(\omega,e)\\
&= \sum_{e: \tau_e\omega^\prime=\omega} \tilde\pi(\omega^\prime,e) \phi(\omega^\prime) \, \d \P_0\\
&= \sum_{e: \tau_e\omega^\prime=\omega}\d \mu(\omega^\prime,e)\\
&=\d (\mu)_2(\omega). 
\end{aligned}
$$
Hence, $(\mu)_1=(\mu)_2\ll \P_0$. Furthermore, for any edge $e\in \mathbb U_d$ being open in the configuration $\omega$ (i.e., $\omega(e)=1$), 
$$
\frac{\d\mu(\omega,e)}{\d(\mu_1)(\omega)} = \tilde \pi(\omega,e)>0,
$$
recall \eqref{pitilde}. Hence $\mu \in \mathcal M_1^\star$.

Similarly, given any $\mu \in \mathcal M_1^\star$, we can choose
$$
(\tilde \pi, \phi)= \bigg(\frac {\d \mu}{\d (\mu)_1}, \frac {\d (\mu)_1}{\d \P_0}\bigg) \in \mathcal E.
$$
\end{proof}
We also need an ergodic theorem for the environment Markov chain under any transition kernel $\tilde\pi\in\widetilde\Pi$.
This result is standard in the elliptic case. The proof below is an adaptation of the standard proof to our non-elliptic setting.
\begin{theorem}\label{ergodicthm}
Fix $\tilde\pi\in\widetilde\Pi$. If there exists a probability measure $\mathbb Q \ll \P_0$ which is $\tilde\pi$-invariant, then $\mathbb Q\sim \P_0$ and the environment Markov chain with initial law $\mathbb Q$ and transition kernel $\tilde\pi$ is stationary and ergodic for $\P_0$. Moreover, there is at most one probability measure $\mathbb Q$ which is $\tilde\pi$- invariant probability and is absolutely continuous with respect to $\P_0$.
\end{theorem}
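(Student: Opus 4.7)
The key step is the equivalence $\mathbb Q \sim \P_0$; once this is in hand, stationarity is automatic and ergodicity and uniqueness follow by standard arguments. Write $\phi = \d\mathbb Q/\d\P_0$ and $A = \{\phi=0\}\subset\Omega_0$. Applied at $\omega\in A$, the invariance identity \eqref{invdensity} reads $0 = \sum_e \tilde\pi(\tau_{-e}\omega,e)\phi(\tau_{-e}\omega)$; since each summand is non-negative and, by \eqref{pitilde}, $\tilde\pi(\tau_{-e}\omega,e)>0$ exactly when the edge $\{-e,0\}$ is open in $\omega$, we conclude that \emph{$A$ is invariant under backward steps along open edges}: $\omega\in A$ and $\omega_{\{-e,0\}}=1$ imply $\tau_{-e}\omega\in A$. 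The dual identity $\int f\phi\,\d\P_0 = \int(R_{\tilde\pi}f)\phi\,\d\P_0$ tested with $f=\1_A$ gives $\int\phi(\omega)\sum_e\tilde\pi(\omega,e)\1_A(\tau_e\omega)\,\d\P_0=0$, which by non-negativity forces $\tilde\pi(\omega,e)\1_A(\tau_e\omega)=0$ for $\P_0$-a.e.\ $\omega\in A^c$ and every $e\in\mathbb U_d$; hence \emph{$A^c$ is invariant under forward steps along open edges}. Combined, no open edge of $\omega$ can connect an $x\in\mathcal C_\infty(\omega)$ with $\tau_x\omega\in A$ to a $y\in\mathcal C_\infty(\omega)$ with $\tau_y\omega\in A^c$; since the cluster is connected through its open edges, for $\P_0$-a.e.\ $\omega$ the set $\{x\in\mathcal C_\infty(\omega):\tau_x\omega\in A\}$ is either empty or all of $\mathcal C_\infty(\omega)$.

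Define $E := \{\omega\in\Omega : \mathcal C_\infty(\omega)\neq\emptyset$ and $\exists\,y\in\mathcal C_\infty(\omega)$ with $\tau_y\omega\in A\}$. Using $\mathcal C_\infty(\tau_z\omega)=\mathcal C_\infty(\omega)-z$, the event $E$ is $\Z^d$-translation-invariant, so by ergodicity of $\P$ one has $\P(E)\in\{0,1\}$. On $\Omega_0$, the dichotomy above identifies $E\cap\Omega_0=A$, whence $\P_0(A)\in\{0,1\}$; the alternative $\P_0(A)=1$ is ruled out by $\int\phi\,\d\P_0=1$, so $\mathbb Q\sim\P_0$ follows.

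Stationarity under $\mathbb Q$ is immediate from the invariance. For ergodicity, let $g$ be a bounded $R_{\tilde\pi}$-invariant function: under the two-sided stationary chain with marginal $\mathbb Q$, $(g(\omega_n))_n$ is a bounded stationary martingale and is therefore $\mathbb Q$-a.s.\ constant in $n$; irreducibility of the walk on $\mathcal C_\infty(\omega)$ then yields $g\circ\tau_x=g$ on $\mathcal C_\infty(\omega)$ for $\P_0$-a.e.\ $\omega$, and any measurable section $y(\omega)\in\mathcal C_\infty(\omega)$ produces a $\Z^d$-translation-invariant extension of $g$ to $\Omega$, which is $\P$-a.s.\ constant by ergodicity of $\P$. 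Hence $g$ is $\mathbb Q$-a.s.\ constant and the chain is ergodic. Uniqueness then follows: for any two invariant $\mathbb Q_1,\mathbb Q_2\ll\P_0$, the Birkhoff theorem gives $\frac1n\sum_{k=0}^{n-1}f(\tau_{X_k}\omega)\to\int f\,\d\mathbb Q_i$ on a $\P_0$-full set (using the equivalences $\mathbb Q_i\sim\P_0$), forcing $\int f\,\d\mathbb Q_1=\int f\,\d\mathbb Q_2$ for every bounded $f$. The principal technical point, and the main obstacle relative to the elliptic case (where backward propagation alone makes $A$ fully $\Z^d$-translation-invariant), is the symmetric forward invariance of $A^c$ extracted from the dual identity; coupled with connectedness of the percolation cluster, it substitutes for the missing uniform ellipticity.
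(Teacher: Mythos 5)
Your proof is correct and reaches the same milestones as the paper's (absolute continuity $\mathbb Q\sim\P_0$, then ergodicity, then uniqueness), but two of the three steps are carried out by genuinely different arguments. For the zero--one law the paper does not reprove anything: having shown that $A=\{\d\mathbb Q/\d\P_0>0\}$ is $R_\pi$-invariant, it simply cites the ergodicity of $\P_0$ under the environment chain $R_\pi$ from Proposition 3.5 of \cite{BB07}. You instead reconstruct that ergodicity from first principles by converting the cluster-local invariance of $A$ into a $\Z^d$-translation-invariant event $E$ on all of $\Omega$ (using connectedness of $\mathcal C_\infty$) and then invoking ergodicity of the i.i.d.\ product measure $\P$; this is essentially a self-contained reproof of the cited result. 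For the ergodicity of the $\tilde\pi$-chain under $\mathbb Q$, the paper has a very short contradiction argument: if $D$ were a nontrivial invariant set, then $\mathbb Q_D=\mathbb Q(\cdot\mid D)$ would be another $\tilde\pi$-invariant law $\ll\P_0$ whose density vanishes on $D^c$, contradicting the positivity established in the first step. Your route goes through bounded $R_{\tilde\pi}$-harmonic functions, a stationary bounded martingale argument, irreducibility of the walk on $\mathcal C_\infty(\omega)$, and another appeal to ergodicity of $\P$; it works, but it is considerably longer, and the paper's trick is worth noting because it recycles Step 1 instead of re-opening the translation-invariance machinery. The uniqueness step is the same in both. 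One small stylistic point: your derivations of ``backward invariance of $A$'' (from the invariance identity \eqref{invdensity}) and ``forward invariance of $A^c$'' (from the dual identity) are logically the same statement, since $\mathbb U_d$ is symmetric under $e\mapsto -e$; either one alone already gives that no open edge can separate $A$ from $A^c$ on the cluster, so the word ``combined'' overstates what is needed. Finally, both your write-up and the paper's gloss over the routine Fubini/countable-intersection step that upgrades the $\P_0$-a.e.\ statement at the origin to a simultaneous statement at all $x\in\mathcal C_\infty(\omega)$; that is standard but should be acknowledged.
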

\begin{proof}
We fix $\tilde\pi\in \widetilde\Pi$ and let $\mathbb Q\ll \P_0$ be $\tilde \pi$- invariant. We prove the theorem in three steps.

 Let us first show that, $\frac {\d \mathbb Q} {\d \P_0}>0$ $\P_0$- almost surely. This will imply that $\mathbb Q\sim \P_0$. 

Indeed, to the contrary, let us assume that, $0< \P_0(A) <1$ where $A= \big\{\omega\colon \frac {\d \mathbb Q} {\d \P_0}(\omega)>0\big\}$. Then, $\mathbb Q\sim \P_0(\cdot| A)$. 
If we sample $\omega_1 \in \Omega_0$ according to $\mathbb Q$ and $\omega_2$ according to $\tilde\pi(\omega_0,\cdot)$, then 
the distribution of $\omega_2$ is absolutely continuous with respect to $\mathbb Q$ (recall $\mathbb Q$ is $\tilde\pi$ invariant)  and thus, on $A^c$, the distribution of $\omega_2$ has zero measure.

This implies that, for almost every $\omega_1\in A$ and every $e\in \mathbb U_d$ such that $\tilde \pi(\omega_1,e)>0$, $\tau_e \omega_1 \in A$. Since $\tilde \pi \in\widetilde \Pi$, 
for almost every $\omega_1\in A$ and every $e\in \mathbb U_d$ such that $\pi(\omega_1,e)>0$, $\tau_e \omega_1 \in A$.
Now if we sample $\omega_1$ according to $\P_0(\cdot |A)$ and $\omega_2$ according to $\pi(\omega_1,\cdot)$, then, with
probability $1$, $\omega_2\in A$. In other words, $A$ is invariant under $\pi$ (more precisely, $A$ is invariant under the Markov kernel $R_\pi$). Since $\P_0$ is $\pi$-ergodic (see Proposition 3.5 in \cite{BB07}), $\P_0(A)\in \{0,1\}$. By our assumption, $\P_0(A)=1$.

 Now we prove that the environment Markov chain with initial law $\mathbb Q$ and transition kernel $\tilde\pi$ is $\P_0$ ergodic. Let us assume on the contrary, that for some measurable $D$, $\mathbb Q(D)>0$, $\mathbb Q(D^c)>0$ and $D$ is $\tilde \pi$ invariant.
Hence $\P_0(D)>0$ and $\P_0(D^c)>0,$ by the last step. Further, the conditional measure $\mathbb Q_D(\cdot)= \mathbb Q(\cdot| D)$ is $\tilde\pi$ invariant and $\mathbb Q_D\ll \P_0$.
But $\mathbb Q_D(D^c)=0$ and hence, $\frac{\d\mathbb Q_D}{\d \P_0}(D^c)=0$. This contradicts the first step.  

To conclude the proof, we need to prove uniqueness of any $\mathbb Q$ which is $\tilde\pi$- invariant and absolutely continuous with
resect to $\P_0$. Let $\Omega^{\Z}$ be the space of the trajectories $(\dots,\omega_{-1},\omega_0,\omega_1,\dots)$ of the environment 
chain, $\mu_{\mathbb Q}$ the measure associated to the transition kernel $\tilde \pi$ whose finite dimensional distributions are given by
$$
\mu_\mathbb Q\big((\omega_{-n},\dots,\omega_n) \in A\big)= \int_A \mathbb Q(\d \omega_{-n}) \prod_{j=-n}^{n-1}\tilde\pi\big(\omega_j, \d \omega_{j+1}\big).
$$
for any finite dimensional cylinder set $A$ in $\Omega^\Z$. Let $T: \Omega^\Z \longrightarrow \Omega^\Z$ be the shift given by $(T\omega)_n= \omega_{n+1}$ for all $n\in \Z$. 
Since $\mathbb Q$ is $\tilde\pi$- invariant and ergodic, by Birkhoff's theorem,
$$
\lim_{n\to\infty} \sum_{k=0}^{n-1} g \circ T^k = \int g \d \mu_\mathbb Q,
$$
$\mu_\mathbb Q$ (and hence $\mu_{\P_0}$) almost surely for any bounded and measurable $g$ on $\Omega^\Z$. Since the environment chain $(\tau_{X_k}\omega)_{k\geq 0}$ has the same law in $\int P^{\tilde\pi,\omega}_0 \d \mathbb Q$ as $(\omega_0,\omega_1,\dots)$ has in $\mu_\mathbb Q$, if $f(\omega_0)= g(\omega_0,\omega_1,\dots)$, then 
$$
\lim_{n\to\infty} \sum_{k=0}^{n-1} f \circ \tau_{X_k}= \lim_{n\to\infty} \sum_{k=0}^{n-1} g \circ T^k = \int g \d \mu_\mathbb Q= \int f \d \mathbb Q,
$$
for any bounded and measurable $f$ on $\Omega$. The uniqueness of $\mathbb Q$ follows.
\end{proof}

\begin{cor}\label{ergcor}
For any pair $(\tilde\pi,\phi)\in \mathcal E$ and every continuous and bounded function $f: \Omega_0 \times  \mathbb U_d \rightarrow \R$, 
$$
\lim_{n\to\infty} \frac 1n \sum_{k=0}^{n-1} f(\tau_{X_k}\omega, X_{k+1}-X_k)= \int_{\Omega_0} \, \d\P_0\, \phi(\omega) \sum_e f(\omega,e) \tilde\pi(\omega,e) \quad\P_0- \, \mbox{almost surely}.
$$
\end{cor}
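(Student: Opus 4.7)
The plan is to deduce the corollary from Theorem \ref{ergodicthm} by splitting the time average into an ergodic average over the environment plus a bounded martingale-difference average that vanishes in the limit.

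Set $\mathbb Q := \phi\,\d\P_0$. Since $(\tilde\pi,\phi)\in\mathcal E$, Theorem \ref{ergodicthm} yields $\mathbb Q\sim\P_0$ and that the environment Markov chain $(\tau_{X_n}\omega)_{n\geq 0}$ is stationary and ergodic under the averaged law $\overline P := \int P^{\tilde\pi,\omega}_0\,\d\mathbb Q(\omega)$. Define the bounded measurable function $g$ on $\Omega_0$ by
$$
g(\omega) := \sum_{e\in\mathbb U_d} f(\omega,e)\,\tilde\pi(\omega,e),
$$
and set $\xi_k := f(\tau_{X_k}\omega,X_{k+1}-X_k) - g(\tau_{X_k}\omega)$. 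Write
$$
\frac{1}{n}\sum_{k=0}^{n-1} f(\tau_{X_k}\omega,X_{k+1}-X_k) \;=\; \frac{1}{n}\sum_{k=0}^{n-1} g(\tau_{X_k}\omega) \;+\; \frac{1}{n}\sum_{k=0}^{n-1}\xi_k.
$$

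For the first sum, Birkhoff's ergodic theorem applied to the stationary ergodic chain from the previous paragraph gives $\overline P$-a.s.\ convergence to $\int g\,\d\mathbb Q = \int_{\Omega_0} \phi(\omega)\sum_e f(\omega,e)\tilde\pi(\omega,e)\,\d\P_0(\omega)$, which is precisely the right-hand side of the claim. For the second sum, observe that by the very definition of $P^{\tilde\pi,\omega}_0$ we have $E^{\tilde\pi,\omega}_0\big[f(\tau_{X_k}\omega,X_{k+1}-X_k)\,\big|\,\mathcal F_k\big] = g(\tau_{X_k}\omega)$, where $\mathcal F_k = \sigma(X_0,\dots,X_k)$. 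Hence $(\xi_k)_{k\geq 0}$ is a uniformly bounded martingale-difference sequence under every $P^{\tilde\pi,\omega}_0$, and the standard strong law for bounded martingale differences gives $\frac{1}{n}\sum_{k<n}\xi_k\to 0$ $P^{\tilde\pi,\omega}_0$-a.s.\ for every $\omega\in\Omega_0$.

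Combining the two limits yields the asserted convergence $\overline P$-a.s.; by Fubini this means that for $\mathbb Q$-a.e.\ $\omega$ it holds $P^{\tilde\pi,\omega}_0$-a.s., and the equivalence $\mathbb Q\sim\P_0$ from Theorem \ref{ergodicthm} promotes this to the stated $\P_0$-a.s.\ conclusion. There is no serious obstacle in this argument; the only subtle point is that the ergodic theorem naturally delivers an $\overline P$-almost sure statement, and one must use the absolute continuity $\mathbb Q\sim\P_0$ guaranteed by Theorem \ref{ergodicthm} (which itself is the non-trivial input, using the $\pi$-ergodicity of $\P_0$ on the percolation cluster) to transfer the conclusion to $\P_0$-almost every environment.
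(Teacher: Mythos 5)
Your proof is correct and fills in explicitly what the paper leaves implicit. The paper dismisses this corollary as an ``immediate consequence of Theorem~\ref{ergodicthm} and Birkhoff's ergodic theorem'', relying on the last step in the proof of Theorem~\ref{ergodicthm}, where Birkhoff is applied on the bilateral trajectory space $\Omega^\Z$ to bounded measurable functions $g(\omega_0,\omega_1,\dots)$ of the whole path: since $f(\tau_{X_k}\omega,X_{k+1}-X_k)$ is ($\P_0$-a.s.) a function of the consecutive pair $(\tau_{X_k}\omega,\tau_{X_{k+1}}\omega)$, one can feed it directly into that trajectory-space ergodic theorem. Your route is genuinely different: you apply Birkhoff only to the one-step marginal chain $(\tau_{X_k}\omega)_k$ through the conditional expectation $g(\omega)=\sum_e f(\omega,e)\tilde\pi(\omega,e)$, and dispose of the remainder $\xi_k=f(\tau_{X_k}\omega,X_{k+1}-X_k)-g(\tau_{X_k}\omega)$ by the strong law for bounded martingale differences. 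The two approaches are of comparable length, but yours has the modest advantage that it sidesteps the ($\P_0$-a.s.) injectivity of $(\omega,e)\mapsto(\omega,\tau_e\omega)$, which the pair-encoding implicitly uses; the paper's route has the advantage of reusing verbatim the machinery already set up in the proof of Theorem~\ref{ergodicthm}. Your final transfer of the $\overline P$-a.s.\ statement to a ``$P^{\tilde\pi,\omega}_0$-a.s.\ for $\P_0$-a.e.\ $\omega$'' statement via $\mathbb Q\sim\P_0$ is exactly right and is where the content of Theorem~\ref{ergodicthm} actually enters.
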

\begin{proof}
This is an immediate consequence of Theorem \ref{ergodicthm} and Birkhoff's ergodic theorem.
\end{proof}
\subsection{The lower bounds of Theorem \ref{thmlevel2} and Theorem \ref{thmmomgen}.}

We are ready to deduce the lower bound \eqref{ldplb}. Recall the definition of $\mathfrak I$ from \eqref{Idef}.

\begin{lemma}\label{lemmalb}
For any open set $\mathcal G$ in $\Mcal_1(\Omega_0 \times  \mathbb U_d)$, $\P_0$- almost surely,
\begin{equation}\label{eqlemmalb}
\begin{aligned}
\liminf_{n\to\infty} \frac 1n \log \P^{\pi,\omega}_0 \big(\mathcal L_n \in \mathcal G\big) &\geq - \inf_{\mu\in \mathcal G} \mathfrak I(\mu) \\
&=- \inf_{\mu\in \mathcal G} \mathfrak I^{\star\star}(\mu).
\end{aligned}
\end{equation}
\end{lemma}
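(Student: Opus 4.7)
The plan is a change-of-measure / tilting argument combined with the ergodic theorem of Corollary \ref{ergcor}, and then a convex-analysis step to pass from $\mathfrak I$ to $\mathfrak I^{\star\star}$. First, reduce to the following pointwise statement: for every $\mu \in \Mcal_1^\star$ with $\mathfrak I(\mu) < \infty$ and every weak neighborhood $\mathcal G$ of $\mu$, we have $\liminf_n \frac 1n \log \P^{\pi,\omega}_0(\mathcal L_n \in \mathcal G) \geq -\mathfrak I(\mu)$ for $\P_0$-a.e.\ $\omega$. Because the weak topology on the compact space $\Mcal_1(\Omega_0\times\mathbb U_d)$ is second-countable, one then exhausts over a countable dense family of such $\mu$ to obtain the $\P_0$-a.s.\ inequality uniformly in open $\mathcal G$, and takes the infimum over $\mu \in \mathcal G \cap \Mcal_1^\star$; points outside $\Mcal_1^\star$ carry infinite $\mathfrak I$ by definition and contribute nothing.

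Given such a $\mu$, use Lemma \ref{onetoone} to produce a pair $(\tilde\pi,\phi) \in \mathcal E$. Since $\tilde\pi$ has exactly the same edge-support as $\pi_\omega(0,\cdot)$ by \eqref{pitilde}, Theorem \ref{ergodicthm} applies and the environment chain under $R_{\tilde\pi}$ is ergodic with invariant measure $\phi\,\d\P_0$, so Corollary \ref{ergcor} gives, for $\P_0$-a.e.\ $\omega$, that $\mathcal L_n \to \mu$ weakly under $P^{\tilde\pi,\omega}_0$. In particular $P^{\tilde\pi,\omega}_0(\mathcal L_n \in \mathcal G) \to 1$. The Radon--Nikodym derivative of $P^{\tilde\pi,\omega}_0$ with respect to $P^{\pi,\omega}_0$ on $\sigma(X_0,\dots,X_n)$ is $e^{M_n}$, where
$$
M_n \;=\; \sum_{k=0}^{n-1} \log \frac{\tilde\pi(\tau_{X_k}\omega, X_{k+1}-X_k)}{\pi_{\tau_{X_k}\omega}(0, X_{k+1}-X_k)}.
$$
Apply Corollary \ref{ergcor} to $f(\omega,e) = \log\bigl(\tilde\pi(\omega,e)/\pi_\omega(0,e)\bigr)$: this $f$ is bounded above by $\log(2d)$ (since $\tilde\pi\le 1$ and $\pi_\omega(0,e)\ge (2d)^{-1}$ when positive), and its $\mu$-expectation equals $\mathfrak I(\mu)$, hence finite by assumption. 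Relative entropy being nonnegative rules out cancellation, so $f \in L^1(\mu)$ and $n^{-1}M_n \to \mathfrak I(\mu)$ both $P^{\tilde\pi,\omega}_0$-a.s.\ and in $L^1$ via the standard $L^1$-ergodic theorem.

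Set $A_n = \{\mathcal L_n \in \mathcal G\}$. Writing $\P^{\pi,\omega}_0(A_n) = E^{\tilde\pi,\omega}_0[e^{-M_n}\mathbf 1_{A_n}]$ and applying Jensen's inequality to the probability measure $P^{\tilde\pi,\omega}_0(\,\cdot\,|A_n)$ gives
$$
\P^{\pi,\omega}_0(A_n) \;\geq\; P^{\tilde\pi,\omega}_0(A_n)\,\exp\!\Bigl(-\tfrac{1}{P^{\tilde\pi,\omega}_0(A_n)}\,E^{\tilde\pi,\omega}_0[M_n\mathbf 1_{A_n}]\Bigr).
$$
Since $P^{\tilde\pi,\omega}_0(A_n) \to 1$ and the $L^1$-convergence of $M_n/n$ forces $n^{-1}E^{\tilde\pi,\omega}_0[M_n\mathbf 1_{A_n}] \to \mathfrak I(\mu)$, taking $\frac 1n \log$ and passing to $\liminf$ yields the desired bound $-\mathfrak I(\mu)$. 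The main technical point to verify carefully is this $L^1$-uniform integrability of $M_n/n$, which is where non-ellipticity could in principle bite (through $\log\tilde\pi$ blowing up); here it is tamed by the assumption $\mathfrak I(\mu)<\infty$ and the fact that the troublesome tail of $-\log\tilde\pi$ is controlled by the relative-entropy integrand.

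For the identity $\inf_{\mathcal G}\mathfrak I = \inf_{\mathcal G}\mathfrak I^{\star\star}$, the inequality $\mathfrak I^{\star\star}\leq \mathfrak I$ is automatic. Convexity of $\mathfrak I$ on $\Mcal_1(\Omega_0\times \mathbb U_d)$ (immediate from the form \eqref{Idef}, the relative entropy being jointly convex) implies that $\mathfrak I^{\star\star}$ is precisely the lower semicontinuous convex hull of $\mathfrak I$, so on any open set the two functionals share the same infimum by a standard approximation: any $\mu_0 \in \mathcal G$ with $\mathfrak I^{\star\star}(\mu_0)<\infty$ can be approached weakly by $\nu_k \in \mathcal G \cap \Mcal_1^\star$ with $\mathfrak I(\nu_k) \to \mathfrak I^{\star\star}(\mu_0)$, giving the matching upper bound.
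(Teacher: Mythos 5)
Your proof follows essentially the same route as the paper's: pass from $\mu\in\Mcal_1^\star$ to a pair $(\tilde\pi,\phi)\in\mathcal E$ via Lemma \ref{onetoone}, tilt $P_0^{\pi,\omega}$ to $P_0^{\tilde\pi,\omega}$, use Theorem \ref{ergodicthm}/Corollary \ref{ergcor} to see $\mathcal L_n\to\mu$ under the tilted law, apply Jensen to control the Radon--Nikodym factor, and finish the identity $\inf_{\mathcal G}\mathfrak I=\inf_{\mathcal G}\mathfrak I^{\star\star}$ by convexity. You are in fact slightly more careful than the paper about the countable exhaustion needed for the single $\P_0$-null set and about the unboundedness of $f=\log(\tilde\pi/\pi)$ when invoking the ergodic theorem (the paper cites Corollary \ref{ergcor}, which is stated only for bounded continuous $f$), but these are exactly the gaps one would expect to fill.
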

\begin{proof}
For the lower bound in \eqref{eqlemmalb}, it is enough to show that, for any $\mu\in \Mcal_1^\star$ and any open neighborhood $\mathcal U$ containing $\mu$,
\begin{equation}\label{lb0}
\liminf_{n\to\infty} \frac 1n \log P^{\pi,\omega}_0\big(\mathcal L_n \in \mathcal U\big) \geq - \mathfrak I(\mu).
\end{equation}
Given $\mu\in\Mcal_1^\star$, from Lemma \ref{onetoone} we can get the pair 
\begin{equation}\label{map}
(\tilde \pi, \phi)= \bigg(\frac {\d \mu}{\d (\mu)_1}, \frac {\d (\mu)_1}{\d \P_0}\bigg) \in \mathcal E,
\end{equation}
and by Theorem \ref{ergodicthm}, 
\begin{equation}\label{lb1}
\lim_{n\to\infty} P^{\tilde\pi,\omega}_0\big(\mathcal L_n \in \mathcal U\big) =1.
\end{equation}
Further,
$$
\begin{aligned}
P^{\pi,\omega}_0\big(\mathcal L_n \in \mathcal U\big) &= E^{\tilde\pi,\omega}_0 \bigg\{\1_{\{\mathcal L_n \in \mathcal U\}} \frac{\d P_0^{\pi,\omega}}{\d P_0^{\tilde\pi,\omega}}\bigg\} \\
&=\int \d P^{\tilde\pi,\omega}_0 \bigg\{\1_{\{\mathcal L_n \in \mathcal U\}} \exp\bigg\{-\log\,\frac{\d P_0^{\tilde\pi,\omega}}{\d P_0^{\pi,\omega}}\bigg\}\bigg\} .
\end{aligned}
$$
Hence, by Jensen's inequality, 
$$
\begin{aligned}
\liminf_{n\to\infty}\frac 1n \log P^{\pi,\omega}_0\big(\mathcal L_n \in \mathcal U\big) 
&\geq \liminf_{n\to\infty}\frac 1n \log P^{\tilde\pi,\omega}_0\big(\mathcal L_n \in \mathcal U\big) \\
&\qquad-\limsup_{n\to\infty}\frac 1{nP^{\tilde\pi,\omega}_0\big(\mathcal L_n \in \mathcal U\big)} \int_{\{\mathcal L_n \in \mathcal U\}} \d P^{\tilde\pi,\omega}_0 \bigg\{\log\,\frac{\d P_0^{\tilde\pi,\omega}}{\d P_0^{\pi,\omega}}\bigg\}\\
&= -\int \d\P_0(\omega)\,\phi(\omega) \sum_{|e|=1}  \tilde\pi(\omega,e)  \log \frac{\tilde\pi(\omega,e)}{\pi_\omega(0,e)}\\
&=- \mathfrak I(\mu),
\end{aligned}
$$
where the first equality follows from \eqref{lb1} and corollary \ref{ergcor} and the second equality follows from \eqref{map}. This proves \eqref{lb0}. Finally, since 
$\mathcal G$ is open, $\inf_{\mu\in \mathcal G} \mathfrak I(\mu)= \inf_{\mu\in \mathcal G} \mathfrak I^{\star\star}(\mu)$ (see \cite{R70}). This proves the equality in \eqref{eqlemmalb} and the lemma.
\end{proof}
\begin{cor}\label{corlb}
For every continuous and bounded function $f: \Omega_0 \times  \mathbb U_d\longrightarrow \R$ and for $\P_0$-almost every $\omega\in \Omega_0$, 
\begin{equation}\label{lb}
\begin{aligned}
\liminf_{n\to\infty} \frac 1n\log E^{\pi,\omega}_0 \bigg\{ \exp \bigg(\sum_{k=0}^{n-1} f\big(\tau_{X_k}\omega, X_{k+1}-X_k\big)\bigg)\bigg\} &\geq \sup_{\mu\in \Mcal_{1}^{\star}} \big\{\langle f,\mu\rangle- \mathfrak I(\mu)\big\}\\
&=\sup_{\mu\in \Mcal_{1}(\Omega_0 \times  \mathbb U_d)} \big\{\langle f,\mu\rangle- \mathfrak I(\mu)\big\}.
\end{aligned}
\end{equation}
\end{cor}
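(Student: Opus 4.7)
The plan is to derive this corollary from the weak large deviation lower bound of Lemma \ref{lemmalb} via a standard Laplace/Varadhan-style argument. By the definition of the pair empirical measure, $n\langle f, \mathcal L_n\rangle = \sum_{k=0}^{n-1} f(\tau_{X_k}\omega, X_{k+1}-X_k)$, so the quantity of interest equals $E^{\pi,\omega}_0[e^{n\langle f, \mathcal L_n\rangle}]$. Fix any $\mu \in \Mcal_1^\star$ and $\eps > 0$. Since $f$ is continuous and bounded on the compact space $\Omega_0 \times \mathbb U_d$, the map $\nu \mapsto \langle f, \nu\rangle$ is weakly continuous, so
\[
\mathcal U_{\mu,\eps} := \bigl\{\nu \in \Mcal_1(\Omega_0 \times \mathbb U_d) : \langle f, \nu\rangle > \langle f, \mu\rangle - \eps\bigr\}
\]
is an open neighborhood of $\mu$. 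Restricting the expectation to $\{\mathcal L_n \in \mathcal U_{\mu,\eps}\}$ and using the pointwise bound $\langle f, \mathcal L_n\rangle > \langle f, \mu\rangle - \eps$ there,
\[
E^{\pi,\omega}_0\bigl[e^{n\langle f, \mathcal L_n\rangle}\bigr] \geq e^{n(\langle f, \mu\rangle - \eps)}\, P^{\pi,\omega}_0\bigl(\mathcal L_n \in \mathcal U_{\mu,\eps}\bigr).
\]

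Next I would apply $\frac 1n \log$, take $\liminf_{n\to\infty}$, and invoke Lemma \ref{lemmalb} with the open set $\mathcal U_{\mu,\eps}$, whose infimum rate is at most $\mathfrak I(\mu)$ since $\mu$ itself lies in $\mathcal U_{\mu,\eps}$. This gives, for $\P_0$-almost every $\omega$,
\[
\liminf_{n\to\infty} \frac 1n \log E^{\pi,\omega}_0\bigl[e^{n\langle f, \mathcal L_n\rangle}\bigr] \geq \langle f, \mu\rangle - \eps - \mathfrak I(\mu).
\]
Letting $\eps \downarrow 0$ through rationals eliminates $\eps$ on a countable intersection of full-measure events. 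To pass from a fixed $\mu$ to the supremum, I would fix a countable maximizing sequence $\{\mu_k\} \subset \Mcal_1^\star$ with $\langle f, \mu_k\rangle - \mathfrak I(\mu_k) \uparrow \sup_{\mu \in \Mcal_1^\star}\{\langle f, \mu\rangle - \mathfrak I(\mu)\}$; on the full-measure event obtained by countable intersection the lower bound then holds with $\mu_k$ in place of $\mu$ for every $k$, and hence with the supremum. The equality $\sup_{\Mcal_1^\star} = \sup_{\Mcal_1(\Omega_0 \times \mathbb U_d)}$ is immediate from the convention in \eqref{Idef} that $\mathfrak I \equiv \infty$ off $\Mcal_1^\star$.

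No substantial obstacle is expected here, since the main input, Lemma \ref{lemmalb}, has already absorbed all the nontrivial content (the ergodic theorem for $\tilde\pi$-invariant densities via Theorem \ref{ergodicthm} and the Jensen/entropy tilting estimate). The only mild technical point is the measurable selection of a single $\P_0$-null set valid uniformly in $\mu$, which the countable maximizing sequence above handles cleanly.
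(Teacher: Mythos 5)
Your proof is correct and follows essentially the same route as the paper, which simply cites Varadhan's lemma together with Lemma \ref{lemmalb}; you have spelled out the lower-bound half of Varadhan's lemma and added the (necessary, and correct) countable maximizing sequence to control the $\omega$-null set, which the paper leaves implicit.
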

\begin{proof}
This follows immediately from Varadhan's lemma and Lemma \ref{lemmalb}.
\end{proof}

\section{A class $\mathcal G_\infty$ of gradients.}

We introduce a class of functions which will play an important role for the large deviation analysis to follow. We say that a function $G: \Omega_0 \times  \mathbb U_d \longrightarrow \R$ is in class $\mathcal G_\infty$ if it satisfies the 
following conditions:
\begin{itemize}
\item {\bf{Zero mean:}} For every $e\in \mathbb U_d$, 
\begin{equation}\label{meanzero}
\E_0\big(G(\cdot,e)\big)=0.
\end{equation}
\item {\bf{Uniform boundedness.}} For every $e\in \mathbb U_d$, 
\begin{equation}\label{unifbound}
\mathrm{ess} \sup_{\omega-\P_0} G(\omega,e) =M< \infty.
\end{equation}
\item {\bf{Closed loop.}} Let $(x_0,\dots,x_n)$ be a closed loop on the infinite cluster $\mathcal C_\infty$ (i.e., $x_0,x_1,\dots, x_n$ is 
 a nearest neighbor occupied path so that $x_0=x_n$). Then,
 \begin{equation}\label{closedloop}
 \sum_{j=0}^{n-1} G(\tau _{x_j} \omega, x_{j+1}-x_j) =0 \quad \P_0- \mbox{almost surely}.
 \end{equation}
  \end{itemize}
For any $G\in \mathcal G_\infty$, the closed loop condition has two important consequences in the present context: First, along any nearest neighbor occupied path $(x_0, x_1, \dots, x_n)$ so that $x_0=0$ and $x_n=x$ on $\mathcal C_\infty$, we can define the function 
  \begin{equation}\label{Psidef}
 \Psi_G(\omega,x)=  \Psi(\omega,x)=  \sum_{j=0}^{n-1} G(\tau _{x_j} \omega, x_{j+1}-x_j) 
  \end{equation}
by the closed loop condition, this definition is independent of the chosen path for almost every $\omega\in \{x\in \mathcal C_\infty\}$.

Secondly, again by the closed loop condition, $\Psi$ satisfies
\begin{itemize}
\item {\bf{Shift covariance:}} For $\P_0$-almost every $\omega\in \Omega_0$ and all $x, y\in \mathcal C_\infty$,
$$
\Psi(\omega,x)- \Psi(\omega,y)= \Psi(\tau_y\omega, x-y).
$$
\end{itemize}

For any given $G\in \mathcal G_\infty$, let us fix $\Psi$, which satisfies an important property.
  \begin{theorem}[Sub-linear growth at infinity on the cluster]\label{sublinearthm}
 For any $G\in \mathcal G_\infty$, $\Psi$ has at most sub-linear growth at infinity on the infinite cluster $\P_0$- almost surely,. In other words,
  $$
  \lim_{n\to\infty} \max_{\heap{x\in \mathcal C_\infty}{|x|\leq n}}  \frac {|\Psi(\omega,x)|}n =0.
  $$
  \end{theorem}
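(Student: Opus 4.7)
The approach is to exploit three ingredients in concert: the ergodic theorem for the environment Markov chain (Theorem \ref{ergodicthm} and Corollary \ref{ergcor}), the uniform bound \eqref{unifbound} on $G$, and classical geometric facts about the supercritical cluster (the Antal--Pisztora chemical-distance bound and Pisztora's renormalization).

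First I would establish sublinearity along the SRWPC trajectory: $n^{-1}\Psi(\omega,X_n)\to 0$, $\P_0\otimes P^{\pi,\omega}_0$-almost surely. By shift covariance the quantity telescopes, $\Psi(\omega,X_n)=\sum_{k=0}^{n-1}G(\tau_{X_k}\omega,X_{k+1}-X_k)$, and Corollary \ref{ergcor} applied to the pair $(\tilde\pi,\phi)=(\pi,\deg/\E_0[\deg])\in\mathcal E$ (the degree-normalised measure is reversible, hence $\pi$-invariant, for the SRWPC) identifies the a.s.\ limit as $\E_0[\deg]^{-1}\sum_{e\in\mathbb U_d}\E_0[G(\omega,e)\1_{\{\omega_e=1\}}]$. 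The closed-loop condition \eqref{closedloop} on the trivial $2$-cycle $0\to e\to 0$ gives $G(\omega,e)=-G(\tau_e\omega,-e)$ on $\{\omega_e=1\}$; translation invariance of the unconditional measure $\P$ (together with the observation that $\omega_e=1$ keeps $0$ and $e$ in the same cluster, so $\{0\in\mathcal C_\infty\}$ is preserved under $\tau_e$) then yields $\E_0[G(\cdot,e)\1_{\{\omega_e=1\}}]=-\E_0[G(\cdot,-e)\1_{\{\omega_{-e}=1\}}]$, and pairing $e$ with $-e$ zeros the sum.

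Second, I would record an a priori Lipschitz bound for $\Psi$ on the cluster. Uniform boundedness of $G$ gives $|\Psi(\omega,x)|\le M\,d_{\mathrm{chem}}(0,x)$, and by the Antal--Pisztora theorem $d_{\mathrm{chem}}(0,x)\le K(\omega)|x|$ for $|x|$ large, $\P_0$-a.s. Consequently $\Psi_n^{+}(\omega):=\max_{x\in\mathcal C_\infty,\,|x|\le n}|\Psi(\omega,x)|$ is at most linear in $n$, and shift covariance together with a Kingman-type subadditive argument shows $\Psi_n^{+}/n$ converges $\P_0$-a.s.\ to a deterministic constant $c\ge 0$.

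The main obstacle is to conclude $c=0$. The walk-LLN alone is too weak because SRWPC is diffusive: in time $n$ the walk visits only a set of Euclidean radius $O(\sqrt n)$, a range on which the a priori Lipschitz bound already dominates. To close the gap, for each direction $\xi\in S^{d-1}$ I would use Pisztora's renormalization to stitch together crossings of ``good'' $L$-boxes into deterministic quasi-geodesics $\gamma_k$ on $\mathcal C_\infty$ of length $(1+o(1))k$ joining $0$ to cluster points $y_k\approx k\xi$. Then, since $\E_0[G(\cdot,e)]=0$ and the renormalised blocks visited by $\gamma_k$ are ergodically typical along $\xi$, a one-dimensional ergodic theorem applied to $\sum_{(x,x')\in\gamma_k}G(\tau_x\omega,x'-x)=\Psi(\omega,y_k)$ yields $\Psi(\omega,y_k)/|y_k|\to 0$. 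This forces $c=0$, and combined with the Lipschitz control of the previous step produces the claimed uniform sublinear growth.
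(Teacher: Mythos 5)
Your proof diverges from the paper's after the $L^\infty$-Lipschitz step, and the divergence is where it breaks. The paper's route is: first establish sub-linearity \emph{on average} (Lemma~\ref{averagesublinear}, imported from Theorem~5.4 of \cite{BB07}, which needs only zero mean, shift covariance, and square integrability of $\Psi$); then, given any $x\in\mathcal C_\infty$ with $|x|\le n$, use the density Lemma~\ref{Birkhoff} to find a cluster point $y$ with $|y-x|<\delta n$ and $|\Psi(\omega,y)|\le\eps n$, and finally use the chemical-distance Lemma~\ref{chemdist} together with \eqref{unifbound} to transport the bound from $y$ to $x$ at cost $\le Mc\delta n$. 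You reproduce the transport step, but replace the average-sublinearity input with a subadditive/quasi-geodesic argument, and that replacement has genuine gaps.

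Two concrete issues. First, $\Psi_n^+=\max_{|x|\le n,\,x\in\mathcal C_\infty}|\Psi(\omega,x)|$ is not obviously a Kingman-subadditive process: shift covariance gives $\Psi(\omega,x+y)=\Psi(\omega,x)+\Psi(\tau_x\omega,y)$, but maximizing over $|z|\le n+m$ produces a term of the form $\sup_{|x|\le n}\Psi_m^+(\tau_x\omega)$, not a single shifted copy, so the subadditive ergodic theorem does not apply as stated and the existence of the deterministic limit $c$ is unsupported. Second, and more seriously, the one-dimensional ergodic theorem you invoke along $\gamma_k$ does not apply: the quasi-geodesic is built out of $\omega$ itself, so the shifts $\tau_{x_j}$ along $\gamma_k$ and the directions $x_{j+1}-x_j$ at which $G$ is evaluated both depend on $\omega$ in the same region where $G(\tau_{x_j}\omega,\cdot)$ is read off, and the resulting sum is not a Birkhoff sum over any fixed sequence of measure-preserving transformations. (Your first observation, that $\Psi(\omega,X_n)/n\to 0$ along the walk, works precisely because the environment-chain ergodic Theorem~\ref{ergodicthm} is engineered to absorb this feedback; but, as you note, it only reaches the diffusive scale $O(\sqrt n)$.) You would also need uniformity over the direction $\xi$ to control $\max_{|x|\le n}$, which is left unaddressed. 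The paper's Lemma~\ref{averagesublinear} sidesteps all three difficulties at once by averaging over the full $d$-dimensional box, which is a genuine spatial ergodic average over $\Z^d$-shifts.
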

  Before we present the proof let us collect some useful facts which will finish the proof of the theorem. First, we start with a weaker version of the above result.
  \begin{lemma}[Sub-linearity on average]\label{averagesublinear}
  For every $\eps>0$, 
  \begin{equation}
  \lim_{n\to\infty} \frac 1{n^d} \sum_{\heap {x\in \mathcal C_\infty}{|x|\leq n}} \, \1_{\big\{|\Psi(x,\omega|>\eps n\big\}} =0 \qquad \P_0-\,\mbox{almost surely.}
  \end{equation}
  \end{lemma}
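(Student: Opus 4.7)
The plan is to combine the shift-covariance of $\Psi$, the multidimensional spatial ergodic theorem for the $\Z^d$-action on $(\Omega,\mathcal B,\P)$, and Antal--Pisztora chemical-distance estimates for the supercritical cluster. First I would exploit the uniform bound on $G$ and the closed-loop condition to record a Lipschitz-type property: from $\Psi(\omega,x)-\Psi(\omega,y)=\Psi(\tau_y\omega,x-y)$ and the uniform bound $|G|\leq M$, the quantity $|\Psi(\tau_y\omega,x-y)|$ is at most $M$ times the chemical distance between $0$ and $x-y$ in $\mathcal C_\infty(\tau_y\omega)$, which by the Antal--Pisztora shape theorem is $\leq KM|x-y|$ for $|x-y|$ larger than some $\P_0$-a.s.\ finite random threshold. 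Hence $x\mapsto\Psi(\omega,x)$ is effectively Lipschitz on the cluster at large separations.

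Next I would coarse-grain. Partition $B_n$ into mesoscopic cubes $\{Q_\alpha\}$ of side $L_n$ with $L_n\to\infty$ and $L_n/n\to 0$. By the ergodic theorem applied to $\1_{\mathcal C_\infty}$, each $Q_\alpha$ contains $(\theta_p+o(1))L_n^d$ cluster points $\P_0$-a.s., so we may measurably select a reference point $\hat y_\alpha\in Q_\alpha\cap\mathcal C_\infty$. The Lipschitz step then gives $|\Psi(\omega,x)-\Psi(\omega,\hat y_\alpha)|\leq KML_n$ whenever $x\in Q_\alpha\cap\mathcal C_\infty$, so for $n$ sufficiently large the event $\{|\Psi(\omega,x)|>\eps n\}$ forces $|\Psi(\omega,\hat y_\alpha)|>\eps n/2$. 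Summing,
\begin{equation*}
\sum_{x\in B_n\cap\mathcal C_\infty}\1_{\{|\Psi(\omega,x)|>\eps n\}}\ \leq\ L_n^d\cdot\#\bigl\{\alpha:\,|\Psi(\omega,\hat y_\alpha)|>\eps n/2\bigr\},
\end{equation*}
so it suffices to prove $\#\{\alpha:|\Psi(\omega,\hat y_\alpha)|>\eps n/2\}=o((n/L_n)^d)$ $\P_0$-a.s.

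This last estimate is where the zero-mean hypothesis on $G$ must do real work. My strategy is to represent each $\Psi(\omega,\hat y_\alpha)$ as a path-integral of $G$ along a measurable path system (for instance, a breadth-first spanning tree on $B_n\cap\mathcal C_\infty$ rooted at $0$, unambiguously defined by the closed-loop property), and then to rewrite the spatial average $(L_n/n)^d\sum_\alpha|\Psi(\omega,\hat y_\alpha)|/n$ as a weighted sum of the stationary mean-zero random variables $G(\tau_z\omega,e)$; the boundedness and mean-zero of $G$, combined with the $\Z^d$-ergodic theorem in the form $n^{-d}\sum_{z\in B_n}G(\tau_z\omega,e)\to 0$, then yield vanishing of this average, and Markov's inequality converts it into the desired density bound.

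The principal obstacle is executing this last ergodic step rigorously in the non-elliptic setting: the cluster paths meander, so $\Psi$ cannot be decomposed into independent axial increments, and the tree weights $w(e;\omega)$ entering the reparametrization depend on $\omega$ in a nontrivial way. I expect the rigorous argument will require a covariance/energy estimate that exploits the boundedness of $G$ to bound variance contributions from correlated tree weights, together with Antal--Pisztora to control geometric distortion of paths, paralleling in spirit (though without the harmonicity of) sublinearity arguments for the Kipnis--Varadhan corrector in \cite{BB07}, \cite{MP07}.
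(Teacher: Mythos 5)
Your proposal has a genuine gap, and in fact the first two steps of it unwittingly reproduce the argument the paper uses to derive Theorem~\ref{sublinearthm} \emph{from} Lemma~\ref{averagesublinear}, not the proof of Lemma~\ref{averagesublinear} itself. The Lipschitz bound $|\Psi(\omega,x)-\Psi(\omega,y)|\le M\,\d_{\mathrm{ch}}(x,y)\le KM|x-y|$ (via shift-covariance, uniform boundedness of $G$, and Antal--Pisztora) and the coarse-graining over mesoscopic cubes with reference points $\hat y_\alpha$ are exactly the ingredients of the Theorem~\ref{sublinearthm} proof. When you then reduce to showing $\#\{\alpha:\,|\Psi(\omega,\hat y_\alpha)|>\eps n/2\}=o((n/L_n)^d)$, observe that this is, after rescaling, the very statement of Lemma~\ref{averagesublinear} applied to a sparser grid: the reduction is circular and buys nothing.

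The heart of the lemma is precisely the density bound you defer to the third paragraph, and the sketch there does not go through as stated. Two concrete obstructions: (i) the quantity $(L_n/n)^d\sum_\alpha|\Psi(\omega,\hat y_\alpha)|/n$ has absolute values inside, so reparametrizing $\Psi(\omega,\hat y_\alpha)$ as a signed sum of the mean-zero terms $G(\tau_z\omega,e)$ gives no cancellation; (ii) even dropping the absolute values, in any spanning-tree (or path-system) representation the multiplicity with which an edge near the root is traversed is of order $(n/L_n)^d$, so the resulting weights are unbounded and $\omega$-dependent, and the $\Z^d$ ergodic theorem $n^{-d}\sum_{z\in B_n}G(\tau_z\omega,e)\to 0$, which applies only to uniformly weighted spatial averages, does not control such a sum. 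The paper handles this differently: it invokes Theorem~5.4 of \cite{BB07} verbatim, noting that the only properties needed there are shift-covariance, zero mean $\E_0\Psi(\cdot,e)=0$, and square-integrability of $\Psi(\cdot,e)$ --- all of which your $\Psi$ trivially satisfies, indeed in stronger (bounded) form. The BB07 argument is a dimension-induction / box-slicing argument that proceeds by averaging over coordinate directions and does not try to represent $\Psi$ via paths on the cluster; it sidesteps exactly the tree-weight difficulty you ran into. So the approach you sketch is genuinely different from the paper's, but as written it leaves the essential step unproven, and the heuristic you offer for it would need a substantially new idea (not just a covariance estimate) to be made rigorous.
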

  \begin{proof}
  This follows verbatim the proof of Theorem 5.4 in \cite{BB07} (Interestingly, in \cite{BB07}, along with the mean zero and shift-covariance, $\Psi$ being only square integrable with respect to $\P_0$
  is enough to deduce the above result).
  \end{proof}
  
We also need the following version of the classical result of Antal-Pisztora (\cite{AP96}) 
  about the {\it{chemical distance}} of two points in the cluster. Indeed, for $p>p_c(d)$ and $x,y \in \mathcal C_\infty$, let $\d_{\mathrm ch}(x,y)$ denote the minimal length of an open
  path connecting $x$ and $y$. 
  \begin{lemma}\label{chemdist}
  Fix $\delta>0$. Then there exists a constant $c=c(p,d)$ such that, $\P_0$- almost surely, for every $n$ large enough and points $x,y\in \mathcal C_\infty$ with $|x|<n, |y|< n$ and $|x-y|< \delta n$, we have
  $\d_{\mbox{ch}}(x,y) < c \delta n$.
  \end{lemma}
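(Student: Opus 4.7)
The plan is to derive the lemma from the classical Antal--Pisztora theorem \cite{AP96}, which furnishes constants $\rho=\rho(p,d)>0$ and $C_1,C_2>0$ such that
$$
\P\big(x \leftrightarrow y,\ \d_{\mathrm{ch}}(x,y) > \rho|x-y|\big) \leq C_1 \exp\big(-C_2 |x-y|\big) \qquad \text{for every } x,y\in\Z^d.
$$

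First I would establish a uniform \emph{macroscopic} bound: $\P_0$-almost surely, for all $n$ sufficiently large, every pair $x,y\in \mathcal C_\infty \cap [-2n,2n]^d$ with $|x-y|\geq (\log n)^2$ satisfies $\d_{\mathrm{ch}}(x,y)\leq \rho|x-y|$. This is immediate from the Antal--Pisztora bound via a union bound over the $O(n^{2d})$ such pairs: the total probability is controlled by $n^{2d}\cdot C_1 \exp(-C_2(\log n)^2)$, which is summable in $n$, so Borel--Cantelli applies.

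To handle pairs whose Euclidean separation is smaller than $(\log n)^2$, I would bridge through a hub point at mesoscopic scale. This needs the standard percolation fact that $\P_0$-almost surely, for $n$ large, every site in $[-2n,2n]^d$ lies within Euclidean distance $(\log n)^2$ of some point of $\mathcal C_\infty$ (a consequence of the exponential decay of the radius of the dual vacant component at $u$ in the supercritical regime, combined with a union bound and Borel--Cantelli over sites). Then, given $x,y \in \mathcal C_\infty\cap[-n,n]^d$ with $|x-y|<(\log n)^2$, set $z^\star = x + \lfloor \delta n/2\rfloor e_1$ and pick $z\in\mathcal C_\infty$ with $|z-z^\star|\leq (\log n)^2$. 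For $n$ large, routine estimates give $(\log n)^2 \leq |z-x|,\,|z-y| \leq \delta n$ and $|z|\leq 2n$, so the macroscopic bound applies to both $(x,z)$ and $(z,y)$, and the triangle inequality yields
$$
\d_{\mathrm{ch}}(x,y) \leq \d_{\mathrm{ch}}(x,z) + \d_{\mathrm{ch}}(z,y) \leq \rho\big(|z-x|+|z-y|\big) \leq 2\rho\delta n.
$$
Setting $c=2\rho$ closes the argument; the case $|x-y|\geq (\log n)^2$ is handled directly by the macroscopic bound, which gives $\d_{\mathrm{ch}}(x,y)\leq \rho\delta n$.

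The main technical input is the Antal--Pisztora estimate itself. The principal obstacle is the uniformity required to bridge short-distance pairs: one needs the Euclidean distance from \emph{every} site in $[-2n,2n]^d$ to $\mathcal C_\infty$ to be $O(\log n)$ simultaneously, which is a mild but standard geometric input from supercritical Bernoulli percolation.
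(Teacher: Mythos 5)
Your proof is correct in substance but takes a genuinely different route from the paper's. The paper disposes of the lemma in one line by citing Lemma 2.14 of Dembo--Gandolfi--Kesten \cite{DGK01}, which already packages the uniform, all-pairs chemical-distance bound needed here. You instead rebuild that packaging from the raw Antal--Pisztora estimate: a Borel--Cantelli union bound handles all pairs at Euclidean separation $\geq (\log n)^2$, and you introduce a mesoscopic ``hub'' to bridge pairs at shorter separation. The bridging step is the genuinely necessary extra idea if one wants to avoid DGK: without it, two sites of $\mathcal C_\infty$ that happen to be Euclidean-close could conceivably be forced into a long local detour, and Antal--Pisztora says nothing useful about them directly. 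Routing through a hub at distance $\sim\delta n/2$ is exactly what recovers the $\delta n$ scale in the conclusion while keeping $c$ independent of $\delta$.

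Two small points to tidy up. First, with $z^\star = x + \lfloor\delta n/2\rfloor e_1$ and $|x|<n$ you only get $|z|\lesssim 2n + (\log n)^2$, so the macroscopic bound should be stated on a slightly larger box, say $[-3n,3n]^d$, or one should reduce (as one may) to $\delta\leq 1$; this is pure bookkeeping. Second, your justification for the hub-existence fact (every site in $[-2n,2n]^d$ within $(\log n)^2$ of $\mathcal C_\infty$) via ``exponential decay of the radius of the dual vacant component'' is really a $d=2$ picture; in general $d\geq 2$ the standard input is the exponential decay (in $L$) of the probability that a box of side $L$ fails to meet the infinite cluster, which gives the same union bound. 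The fact itself is correct and standard, so this is a matter of citation rather than a gap.

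Overall your argument is a valid, self-contained alternative to the paper's citation of \cite{DGK01}: more work, but it makes transparent exactly which geometric inputs (the AP estimate, density of $\mathcal C_\infty$) are being used.
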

  \begin{proof}
  The statement of this lemma, which is slightly stronger than what is stated in Antal-Pisztora (\cite{AP96}) follows from Lemma 2.14 in \cite{DGK01}.
  \end{proof}
  We need another elementary fact. Let $\theta(p)$ denote the percolation density, i.e., $\theta(p)$ is the probability that $0$ is in the infinite open cluster.
  \begin{lemma}\label{Birkhoff}
  Fix $\delta>0$. For every $n$ large enough, in a ball of radius $\delta n$ in $[-n,n]^d$ there are at least 
  $\delta^d (2n)^d \,\frac {\theta(p)} 2$ points in $\mathcal C_\infty$. 
  \end{lemma}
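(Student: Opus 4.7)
My plan is to view the lemma as a \emph{uniform} version of the multi-parameter Birkhoff ergodic theorem for the translation action of $\Z^d$ on $(\Omega,\mathcal B,\P)$, applied to the indicator $f(\omega)=\1_{\{0\in\mathcal C_\infty\}}$ whose $\P$-mean equals $\theta(p)$. Since $\P$ is a product measure and hence translation-ergodic (Proposition 3.5 in \cite{BB07}), Birkhoff's theorem gives, for $\P$-a.e.\ $\omega$,
$$
\lim_{m\to\infty}\frac{\big|[-m,m]^d\cap\mathcal C_\infty(\omega)\big|}{(2m+1)^d}=\theta(p).
$$
Applied to the shifted F{\o}lner sequence $F_n^{(y)}:=\lfloor ny\rfloor+\big([-\delta n,\delta n]^d\cap\Z^d\big)$ for any \emph{fixed} $y\in\mathbb Q^d\cap[-1,1]^d$, the same theorem yields $|F_n^{(y)}\cap\mathcal C_\infty|/|F_n^{(y)}|\to\theta(p)$ almost surely; intersecting over a countable dense set $\mathcal Y^\infty\subset\mathbb Q^d\cap[-1,1]^d$ produces a single $\P$-full-measure event on which all these limits hold simultaneously.

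The main step is then to upgrade from countably many anchor centers to a uniform bound over arbitrary centers $z\in[-n,n]^d$. Fix a small parameter $\eta>0$ and extract a \emph{finite} sub-net $\mathcal Y^\eta=\{y^{(1)},\dots,y^{(K)}\}\subset\mathcal Y^\infty$ of cardinality $K=K(\delta,\eta)$ (crucially independent of $n$) such that every point in $[-1,1]^d$ lies within Euclidean distance $\eta\delta$ of some $y^{(i)}$. For an arbitrary box $B=z+[-\delta n,\delta n]^d\subset[-n,n]^d$ and the anchor $y^{(i)}\in\mathcal Y^\eta$ closest to $z/n$, set $B^{(i)}:=\lfloor ny^{(i)}\rfloor+[-\delta n,\delta n]^d$; the symmetric difference $B\triangle B^{(i)}$ is covered by $2d$ slabs of width $O(\eta\delta n)$, giving $|B\triangle B^{(i)}|\leq C_d\,\eta\,(2\delta n)^d$ for a purely dimensional constant $C_d$.

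Combining the Birkhoff convergence, applied only to the finitely many reference boxes $B^{(1)},\dots,B^{(K)}$ and hence uniform in $i$, with the volume bound on the symmetric difference produces, uniformly in $z\in[-n,n]^d$,
$$
\big|B\cap\mathcal C_\infty(\omega)\big|\;\geq\;\big|B^{(i)}\cap\mathcal C_\infty(\omega)\big|-|B\triangle B^{(i)}|\;\geq\;\big(\theta(p)-o_n(1)-C_d\eta\big)(2\delta n)^d.
$$
Choosing $\eta<\theta(p)/(4C_d)$ and then $n$ large enough that $o_n(1)<\theta(p)/4$, the right-hand side is bounded below by $(\theta(p)/2)(2\delta n)^d=\delta^d(2n)^d\,\theta(p)/2$, which is the claimed estimate. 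The principal technical obstacle is exactly this uniformization: a priori one has $\sim n^d$ admissible centers $z$, but by rescaling to $z/n\in[-1,1]^d$ one can cover all of them by $K(\delta,\eta)$ anchors that do \emph{not} depend on $n$, so the union bound over the anchor grid costs nothing in $n$, while the approximation error between $B$ and its anchor box $B^{(i)}$ is controlled, in a dimension-only fashion, by the volume of the symmetric difference.
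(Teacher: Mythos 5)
Your reading of the lemma as a \emph{uniform} statement (one bound over all admissible centers $z\in[-n,n]^d$) is the right one: that is exactly how it is used in the proof of Theorem~\ref{sublinearthm}, and the constant $\delta^d(2n)^d\theta(p)/2=(2\delta n)^d\theta(p)/2$ forces the ``ball'' to be read as an $\ell^\infty$-cube. The paper itself only asserts the lemma is ``an easy consequence of Birkhoff's ergodic theorem'' and gives no argument, so there is no written proof to compare against; your net-and-comparison scheme is a legitimate way to produce one, and the arithmetic (symmetric-difference bound $|B\triangle B^{(i)}|\leq C_d\eta(2\delta n)^d$ with $C_d\approx 2d$, choice of $\eta<\theta(p)/(4C_d)$, finite net so the error is uniform in $i$) is all correct.

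The one place you should be more careful is the step where you claim that ``the same theorem'' gives $|F_n^{(y)}\cap\mathcal C_\infty|/|F_n^{(y)}|\to\theta(p)$ a.s. for the \emph{shifted} F{\o}lner sequence $F_n^{(y)}=\lfloor ny\rfloor+[-\delta n,\delta n]^d$. This is not literally the classical increasing-box Birkhoff/Wiener theorem: the boxes are not nested (for $|y|>\delta$) and the set $\{\omega:|[-m,m]^d\cap\mathcal C_\infty(\omega)|/(2m+1)^d\to\theta(p)\}$ being shift-invariant of full measure does not by itself control $\tau_{\lfloor ny\rfloor}\omega$ along the moving sequence $\lfloor ny\rfloor$. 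What saves you is that $F_n^{(y)}$ is a tempered F{\o}lner sequence (the displacement $|\lfloor ny\rfloor|\leq n$ is comparable to the side length $2\delta n$), so one can invoke the pointwise ergodic theorem for Tempelman-regular or Lindenstrauss-tempered sequences, or equivalently the Wiener-type maximal inequality for boxes with bounded displacement-to-size ratio. You should cite one of these explicitly rather than the bare Birkhoff theorem. An alternative, more percolation-flavoured route, which avoids the ergodic subtlety entirely and also makes the role of $d\geq 2$ visible, is to use an exponential concentration estimate $\P\bigl(|\mathcal C_\infty\cap B_L|<(\theta(p)-\eps)|B_L|\bigr)\leq e^{-cL^{d-1}}$, take a union bound over the $O(n^d)$ centers (or over your finite net) and apply Borel--Cantelli; for $d\geq2$ the exponential in $(\delta n)^{d-1}$ beats the polynomial volume factor. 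Either repair closes the gap; as written, the appeal to ``the same theorem'' is the only imprecision.
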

  \begin{proof}
  This is an easy consequence of Birkhoff's ergodic theorem.
  \end{proof}
  We are now ready to prove Theorem \ref{sublinearthm}. 
  
{\it{Proof of Theorem \ref{sublinearthm}:}}
Let $\eps>0$ be arbitrary, $\delta= \frac 12 \big(\frac{4\eps}{\theta(p)}\big)^{\frac 1d}$ and $n$ large enough so that the following three implications hold:
\begin{itemize}
\item
\begin{equation}\label{eq1}
\sum_{\heap {x\in \mathcal C_\infty}{|x|\leq n}} \, \1_{\big\{|\Psi(x,\omega|>\eps n\big\}} < \eps n^d.
\end{equation}
\item For any $x,y\in \mathcal C_\infty$ with $|x|<n$, $|y|<n$ and $|x-y| < \delta n$, 
\begin{equation}\label{eq2}
\d_{\mbox{ch}}(x,y) < c \delta n
\end{equation}
\item 
\begin{equation}\label{eq3}
\begin{aligned}
\#\big\{\mbox{points in a box of radius}\,\, \delta n\,\,\mbox{in}\,\,[-n,n]^d\,\,\mbox{in}\,\, \mathcal C_\infty\} 
> 2\eps n^d.
\end{aligned}
\end{equation}
\end{itemize}
These are consequences of Lemma \ref{averagesublinear}, Lemma \ref{chemdist} and Lemma \ref{Birkhoff} respectively. We note that, for such small $\eps>0$ and large $n$ and every $x\in [-n,n]^d$, there exists $y\in  [-n,n]^d \cap \mathcal C_\infty$ so that $|y-x|< \delta n$ and $|\Psi(\omega,x)| \leq \eps n$, $\P_0$-almost surely. Indeed, by \eqref{eq1} there are at most $\eps n^d$ points $z\in [-n,n]^d$ such that $|\Psi(\omega,z)| \geq \eps n$ and by \eqref{eq2}, there are at least $2\eps n^d$ points in $B_{n\delta}(x)\cap \mathcal C_\infty$. Hence, we have at least one point $y\in  [-n,n]^d \cap \mathcal C_\infty$ such that   $|y-x|< \delta n$ and $|\Psi(\omega,y)| \leq \eps n$, 
$\P_0$- almost surely.

Recall the definition of $\Psi$ from \eqref{Psidef}. Then, by \eqref{eq2},
$$
\begin{aligned}
\big| \Psi(\omega,x) - \Psi(\omega,y)\big| &\leq \d_{\mbox{ch}}(x,y) \,\mbox{ess} \sup_{\omega- \P_0} G(\omega, x)\\
&\leq c \delta n M,
\end{aligned}
$$
 for some $M< \infty$, recall \eqref{unifbound}. Hence, $\P_0$- almost surely, 
$$
\begin{aligned}
|\Psi(\omega,x)| &\leq |\Psi(\omega,y)|+ c \delta n M \\
& \leq\eps n+ c \delta n M.
\end{aligned}
$$
Since $\eps>0$ is arbitrary, Theorem \ref{sublinearthm} is proved.
\qed

We end with a corollary to Theorem \ref{sublinearthm}.
\begin{cor}
Let $G\in \mathcal G_\infty$. For every $\eps>0$,  there exists $c_\eps=c_\eps(\omega)$ so that, for every sequence of points $(x_k)_{k=0}^n$ on $\mathcal C_\infty$ with $x_0=0$ and $|x_{k+1}-x_{k}|=1$,
$$
\bigg|  \sum_{k=0}^{n-1} G(\tau_{x_k} \omega, x_{k+1}-x_k)\bigg| \leq c_\eps+n\eps.
$$
In particular, 
\begin{equation}\label{ub2}
 \sum_{k=0}^{n-1} G(\tau_{x_k} \omega, x_{k+1}-x_k)\geq -c_\eps- n\eps.
\end{equation}
\end{cor}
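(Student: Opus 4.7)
The plan is to identify the telescoping sum as a single evaluation of the gradient $\Psi$ from \eqref{Psidef} and then apply Theorem \ref{sublinearthm}. Since $(x_0, x_1, \ldots, x_n)$ is a nearest-neighbor occupied path on $\mathcal C_\infty$ with $x_0 = 0$, path-independence (which is forced by the closed-loop condition \eqref{closedloop}) will give
$$
\sum_{k=0}^{n-1} G(\tau_{x_k}\omega, x_{k+1} - x_k) = \Psi(\omega, x_n),
$$
and because every step has unit length, the endpoint $x_n$ will lie in $\mathcal C_\infty$ with $|x_n| \leq n$.

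Given $\eps > 0$, I will invoke Theorem \ref{sublinearthm} to produce, for $\P_0$-almost every $\omega$, a threshold $N = N(\eps, \omega)$ such that $\max_{x \in \mathcal C_\infty,\, |x| \leq m} |\Psi(\omega, x)| \leq \eps m$ for all $m \geq N$. For $n \geq N$ this immediately yields $|\Psi(\omega, x_n)| \leq n\eps$. For the remaining finitely many values $n < N$, I will use the uniform bound $M$ on $|G|$ from \eqref{unifbound} to obtain the crude estimate $|\Psi(\omega, x_n)| \leq nM \leq NM$. Defining $c_\eps(\omega) := NM$ then yields the combined bound $|\Psi(\omega, x_n)| \leq c_\eps + n\eps$ uniformly in $n \geq 0$, and \eqref{ub2} follows immediately, since an upper bound on the absolute value is a lower bound on the signed quantity.

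There is no substantive obstacle; the corollary is a direct consequence of the sub-linear growth theorem combined with the path-independent construction of $\Psi$. The only minor subtlety is reading \eqref{unifbound} as furnishing an essential bound on $|G|$ rather than only on $G$ from above — this is in any case how the authors themselves apply it in the proof of Theorem \ref{sublinearthm} when they estimate $|\Psi(\omega, x) - \Psi(\omega, y)|$ along a chemical geodesic of length $c\delta n$ by $c\delta n M$.
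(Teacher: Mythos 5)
Your argument is correct and is exactly the natural reading the authors intend: the paper states the corollary without proof as a consequence of Theorem \ref{sublinearthm}, and what you wrote is the argument being left to the reader. The one refinement worth making explicit is why $|G|\leq M$ holds along the path: the definition \eqref{unifbound} literally bounds $G$ only from above, but the closed-loop condition applied to the two-step loop $(x, x+e, x)$ on an open edge gives $G(\tau_x\omega,e) = -G(\tau_{x+e}\omega,-e) \geq -M$, so $|G|\leq M$ on occupied edges; this is what justifies both your crude bound $|\Psi(\omega,x_n)|\leq nM$ for $n<N$ and the paper's own estimate $|\Psi(\omega,x)-\Psi(\omega,y)| \leq \d_{\mathrm{ch}}(x,y)\,M$ inside the proof of Theorem \ref{sublinearthm}. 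Relatedly, you quietly read the hypothesis as requiring the path to be along \emph{open} edges (rather than merely nearest-neighbor steps between cluster points), which is the only interpretation under which the telescoping identity $\sum_k G(\tau_{x_k}\omega, x_{k+1}-x_k)=\Psi(\omega,x_n)$ is available and the $|G|\leq M$ bound applies; that is indeed the setting in which the corollary is used in Lemma \ref{ub}, where the increments $X_{k+1}-X_k$ of the walk are always open edges.
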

\section{Limiting Logrithmic moment generating functions: proof of Theorem \ref{thmmomgen}.}
In view of Corollary \ref{corlb}, Theorem \ref{thmmomgen} will be proved as soon as prove an upper bound
of the limiting logarithmic moment generating function which matches the right hand side of \eqref{lb}. We first prove an upper bound
based on the sub-linear growth property of gradient functions from the last section and subsequently show that this upper bound matches the lower bound in \eqref{lb}.
\subsection{The upper bound.}
\begin{lemma}\label{ub}
For $\P_0$- almost every $\omega\in \Omega_0$,  
$$
\limsup_{n\to\infty} \frac 1n\log \E_{0,\omega} \bigg\{ \exp \big\{\sum_{k=0}^{n-1} f\big(\tau_{X_k}\omega, X_{k+1}-X_k\big)\big\}\bigg\} \leq \inf_{G\in \mathcal G_\infty}\Lambda(f, G),
$$
where
\begin{equation}\label{UfG}
\Lambda(f, G)=\mathrm{ess}\sup_{\omega\in \Omega_0} \log \sum_{e} \1_{\{\omega(e)=1\}}\pi_\omega(0,e) \exp\big\{f(\omega,e)+ G(\omega,e)\big\},
\end{equation}
\end{lemma}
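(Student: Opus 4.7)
My plan is to exploit the closed loop property of $G \in \mathcal G_\infty$ to reduce the exponential of a sum of $f$ to an exponential whose increments have conditional expectations bounded uniformly by $e^{\Lambda(f,G)}$, modulo an error coming from $\Psi_G$ evaluated at the endpoint of the walk. The sublinearity of Theorem \ref{sublinearthm} (more precisely its corollary \eqref{ub2}) will then make the endpoint error exponentially negligible.

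More concretely, fix $G \in \mathcal G_\infty$, let $\Psi = \Psi_G$ be defined via \eqref{Psidef}, and write
$$
\sum_{k=0}^{n-1} f\bigl(\tau_{X_k}\omega, X_{k+1}-X_k\bigr)
= \sum_{k=0}^{n-1} (f+G)\bigl(\tau_{X_k}\omega, X_{k+1}-X_k\bigr) - \Psi(\omega, X_n),
$$
where the telescoping identity $\sum_{k=0}^{n-1} G(\tau_{X_k}\omega, X_{k+1}-X_k) = \Psi(\omega,X_n)$ holds because $(X_0,\dots,X_n)$ is a nearest-neighbor path on $\mathcal C_\infty$ and $\Psi$ is independent of the chosen path by the closed loop condition. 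Since $X_n$ satisfies $|X_n|\le n$ and belongs to $\mathcal C_\infty$ under $P^{\pi,\omega}_0$, the corollary of Theorem \ref{sublinearthm} (inequality \eqref{ub2}) yields that for every $\eps>0$, there is $c_\eps=c_\eps(\omega)<\infty$ with $-\Psi(\omega,X_n) \le c_\eps + n\eps$, $P^{\pi,\omega}_0$-almost surely.

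For the main $(f+G)$ part, I would use a supermartingale argument. Define
$$
M_n = \exp\Bigl\{\sum_{k=0}^{n-1}(f+G)\bigl(\tau_{X_k}\omega, X_{k+1}-X_k\bigr) - n\,\Lambda(f,G)\Bigr\}.
$$
Conditioning on $\mathcal F_n$ and using the Markov property under $P^{\pi,\omega}_0$ with transition kernel $\pi$ gives
$$
E^{\pi,\omega}_0[M_{n+1} \mid \mathcal F_n] = M_n \cdot e^{-\Lambda(f,G)}\sum_{e}\pi_{\tau_{X_n}\omega}(0,e)\,\1_{\{(\tau_{X_n}\omega)(e)=1\}}\exp\{(f+G)(\tau_{X_n}\omega,e)\}.
$$
By the very definition \eqref{UfG}, this inner sum is at most $e^{\Lambda(f,G)}$ for $\P_0$-almost every environment; since $\P_0$ is invariant and ergodic for the environment chain $\tau_{X_k}\omega$ (recall Proposition 3.5 in \cite{BB07} cited in the proof of Theorem \ref{ergodicthm}), the joint law satisfies $\tau_{X_n}\omega \in \{W\le e^{\Lambda(f,G)}\}$ for every $n$, $P^{\pi,\omega}_0$-a.s., for $\P_0$-a.e.\ $\omega$. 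Thus $M_n$ is a $(P^{\pi,\omega}_0, \mathcal F_n)$-supermartingale with $E^{\pi,\omega}_0[M_n]\le E^{\pi,\omega}_0[M_0]=1$.

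Combining the two pieces, for $\P_0$-a.e.\ $\omega$,
$$
E^{\pi,\omega}_0\Bigl\{\exp\sum_{k=0}^{n-1} f(\tau_{X_k}\omega,X_{k+1}-X_k)\Bigr\}
\le e^{c_\eps+n\eps}\,E^{\pi,\omega}_0[M_n]\,e^{n\Lambda(f,G)}
\le e^{c_\eps+n\eps+n\Lambda(f,G)}.
$$
Taking $\tfrac1n\log$, $\limsup_{n\to\infty}$, and then letting $\eps\downarrow 0$ gives the bound with $G$ fixed; since $G\in\mathcal G_\infty$ is arbitrary, infimizing over $\mathcal G_\infty$ concludes. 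The main (non-routine) input is Theorem \ref{sublinearthm}, already established; once that sub-linearity is in hand the argument is straightforward, and the only mild subtlety is the a.s.-along-trajectories control of the essential supremum, handled via stationarity/ergodicity of the environment chain.
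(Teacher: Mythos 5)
Your proposal is correct and follows essentially the same route as the paper: you bound $\sum(f+G)$ by conditioning step-by-step on the Markov chain (your supermartingale $M_n$ is precisely the paper's "Markov property and successive conditioning" leading to the bound $E^{\pi,\omega}_0[\exp\{\sum(f+G)\}]\le e^{n\Lambda(f,G)}$), and you subtract $\sum G(\tau_{X_k}\omega,X_{k+1}-X_k)=\Psi(\omega,X_n)$, controlled by the sublinearity corollary \eqref{ub2}, exactly as in the paper. The only cosmetic difference is that you phrase the second ingredient via $\Psi(\omega,X_n)$ rather than directly via the sum $\sum G$, which is the same quantity.
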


\begin{proof}
Fix $G\in \mathcal G_\infty$. By the definition of the Markov chain $P^{\pi,\omega}_0$ we have, 
$$
\begin{aligned}
&E^{\pi,\omega}_0\bigg\{\exp\bigg\{ f(\tau_{X_{k}}\omega,X_{k+1}-X_{k})+G(\tau_{X_{k+1}}\omega,X_{k+1}-X_{k})\bigg\}\bigg|X_{k}\bigg\} \\
&= \sum_{|e|=1} \pi_\omega\big(X_{k},X_{k}+e\big) \e^{f(\tau_{X_{k}}\omega,e)+ G(\tau_{X_{k}}\omega,e)}\\
&=\sum_{|e|=1}\1_{\{(\tau_{X_{k}}\omega)(e)=1\}} \pi_\omega\big(X_{k},X_{k}+e\big) \e^{f(\tau_{X_{k}}\omega,e)+ G(\tau_{X_{k}}\omega,e)}\\
&\leq \e^{\Lambda(f, G)},
\end{aligned}
$$
where the uniform upper bound follows from \eqref{UfG}.

Invoking the Markov property and successive conditioning, we have

\begin{equation}\label{ub1}
E^{\pi,\omega}_0\bigg\{\exp\bigg\{\sum_{k=0}^{n-1} \bigg(f(\tau_{X_k}\omega,X_{k+1}-X_{k})+G(\tau_{X_k}\omega,X_{k+1}-X_{k})\bigg)\bigg\}\bigg\} \leq \e^{n\Lambda(f, G)}.
\end{equation}
Plugging the lower bound \eqref{ub2} in \eqref{ub1}, dividing by $n$ on both sides, taking logarithm and passing to $\limsup_{n\to\infty}$ we have the upper bound
$$
\limsup_{n\to\infty} \frac 1n\log \E_{0,\omega} \bigg\{ \exp \big\{\sum_{k=0}^{n-1} f\big(\tau_{X_k}\omega, X_{k+1}-X_k\big)\big\}\bigg\} \leq \Lambda(f, G)+ \eps.
$$
Passing to $\eps\to 0$ and subsequently taking $\inf_{G\in \mathcal G_\infty}$ we finish the proof of the lemma.
\end{proof}


  
\subsection{Equivalence of bounds: Variational analysis.}
  
We pick up from the lower bound \eqref{lb} and denote this variational formula by
\begin{equation}\label{Lf}
\begin{aligned}
\overline H(f)&=\sup_{\mu\in \Mcal_1^\star} \big\{\langle f, \mu\rangle- \mathfrak I(\mu)\big\}\\
&= \sup_{(\tilde\pi,\phi)\in \mathcal E} \bigg\{\int \d\P_0(\omega)\,\phi(\omega) \sum_{|e|=1}  \tilde\pi(\omega,e) \bigg\{f(\omega,e) - \log \frac{\tilde\pi(\omega,e)}{\pi_\omega(0,e)}\bigg\}\bigg\},
\end{aligned}
\end{equation}
recall from Lemma \ref{onetoone} the one-to-one correspondence between elements of the set $\Mcal_1^\star$ and the pairs $\mathcal E$ (and \eqref{map}, \eqref{Idef}). In
this section we show that $\overline H(f)$ equals the upper bound obtained in the last subsection. Modulo some care about containment in the infinite cluster, 
the line of arguments follow parallel to \cite{KRV06} (and also \cite{R06}, {\cite{Y08}). 

 \begin{prop}\label{propequal}
 For every $\eps>0$, there is some $G_\eps\in \mathcal G_\infty$ so that
\begin{equation}\label{equiv0}
 \Lambda(f,G_\eps) \leq\overline H(f)
+ \eps,
\end{equation}
where $\Lambda(f, G_\eps)$ is defined in \eqref{UfG}. Hence, by Corollary \ref{corlb} and Lemma \ref{ub}, 
$$\overline H(f)= \inf_{G\in \mathcal G_\infty} \Lambda(f, G).
$$
\end{prop}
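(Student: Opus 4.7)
The argument is the Lagrangian/variational duality paradigm of \cite{KRV06}, adapted to the non-elliptic SRWPC. The plan is to represent $\overline H(f)$ as a sup-inf where the inner infimum enforces the invariance constraint \eqref{invdensity} via Lagrange multipliers, to exchange $\sup$ and $\inf$ by a min-max theorem, to compute the inner supremum explicitly, and then to realize the resulting near-minimizer as an element of $\mathcal G_\infty$.

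\textbf{Lagrangian reformulation and min-max.} Membership of $(\tilde\pi,\phi)$ in $\mathcal E$ is equivalent to the constraint
$$
\int_{\Omega_0}\phi(\omega)\sum_{e\in\mathbb U_d}\tilde\pi(\omega,e)\bigl[g(\tau_e\omega)-g(\omega)\bigr]\,\d\P_0=0
$$
for every bounded measurable $g:\Omega_0\to\R$. Setting $G_g(\omega,e):=g(\tau_e\omega)-g(\omega)$ and absorbing this as a Lagrange penalty in \eqref{Lf},
$$
\overline H(f)=\sup_{\tilde\pi\in\widetilde\Pi,\,\phi\geq 0,\int\phi=1}\inf_{g}\int_{\Omega_0}\phi(\omega)\sum_e\tilde\pi(\omega,e)\Bigl\{f(\omega,e)+G_g(\omega,e)-\log\tfrac{\tilde\pi(\omega,e)}{\pi_\omega(0,e)}\Bigr\}\,\d\P_0.
$$
A Sion-type min-max theorem exchanges $\sup$ and $\inf$: the integrand is concave in $(\tilde\pi,\phi)$ (thanks to $-\tilde\pi\log\tilde\pi$) and linear in $g$, while compactness is recovered by first truncating $\phi\leq N$ and passing $N\to\infty$. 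For fixed $g$, the Donsker-Varadhan variational formula identifies the inner supremum: the optimal $\tilde\pi$ is the Gibbs kernel $\tilde\pi^\star(\omega,e)\propto\pi_\omega(0,e)\exp\{f+G_g\}$, and then the subsequent supremum over probability densities $\phi$ with respect to $\P_0$ turns the integral into the essential supremum, giving $\overline H(f)=\inf_g\Lambda(f,G_g)$.

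\textbf{Extracting $G_\eps\in\mathcal G_\infty$.} Pick a bounded $\eps$-minimizer $g_\eps$ (by density one may take a bounded cylinder function) and set $G_\eps:=G_{g_\eps}$. Uniform boundedness \eqref{unifbound} is immediate, and the closed-loop condition \eqref{closedloop} holds by telescoping along the cycle. The delicate point is the zero-mean condition \eqref{meanzero}, since $\P_0$ is not translation invariant so $c_e:=\E_0 G_\eps(\cdot,e)$ need not vanish. I would remove this by subtracting an affine function $c\cdot e$ with $c\in\R^d$ chosen to kill the mean coordinate-wise; such a correction preserves \eqref{unifbound} and \eqref{closedloop} (because $\sum_j c\cdot(x_{j+1}-x_j)=0$ along any closed loop) and perturbs $\Lambda(f,\cdot)$ by at most $|c|$. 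One controls $|c|=o(1)$ as $\eps\downarrow 0$ via the Palm-type identity relating $\P_0(\d\omega)$ to its translates along an open edge at the origin, combined with the boundedness of $g_\eps$.

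\textbf{Main obstacle.} The most delicate step is the min-max exchange in the $L^\infty$ setting, where $\Lambda(f,G)$ is an essential supremum: one must verify lower semicontinuity of the functional in $g$ under the appropriate weak-$*$ topology, and truncate carefully enough to retain compactness without collapsing the Gibbs optimizer. A secondary subtlety is that the non-ellipticity of $\pi_\omega$ (its support is only the open edges) restricts the entire variational problem to the open cluster, so the affine renormalization enforcing \eqref{meanzero} must respect the cluster geometry — an issue ultimately resolved by the same ergodicity and chemical-distance inputs (Lemma \ref{chemdist}, Lemma \ref{Birkhoff}) that underpin Theorem \ref{sublinearthm}.
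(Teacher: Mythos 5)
Your high-level blueprint — Lagrangian penalization of the invariance constraint, a min-max exchange, Gibbs-type computation of the inner supremum, and then a realization of the near-minimizer inside $\mathcal G_\infty$ — is indeed the strategy of the paper, which follows \cite{KRV06}. However, two of the steps you treat as routine are in fact the crux of the paper's argument, and in your sketch they do not close.

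\textbf{The min-max exchange.} You propose a Sion-type exchange and suggest recovering compactness by truncating $\phi\leq N$. That truncation only affects the $\phi$-variable; it gives nothing for the $\tilde\pi$-variable, which ranges over $\widetilde\Pi$, and it is the $\tilde\pi$-supremum whose compactness (and upper semicontinuity of the entropy term) you must justify. The paper's key device is entirely absent from your proposal: it introduces the increasing filtration $(\mathcal B_k)_{k\geq 1}$ of \emph{finite} $\sigma$-algebras adapted to the percolation configuration near the origin, restricts $\phi$ and $\tilde\pi$ to be $\mathcal B_k$-measurable, and thereby reduces the $\tilde\pi$-supremum to a finite-dimensional compact set, after which Fan's theorem applies directly. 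This discretization is not cosmetic; it is what makes the whole variational step rigorous, and it propagates all the way through to Step 4 of the paper's proof, where one has to undo the $\mathcal B_k$-conditioning via martingale convergence, Mazur's theorem, and Jensen.

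\textbf{The zero-mean correction.} Because the paper's min-max is performed at the level of $\mathcal B_k$-measurable objects, the approximate gradients it produces are $G_{k,\eps}(\omega,e)=\1_{\{\omega(e)=1\}}\E_0\{g_{k,\eps}(\omega)-g_{k,\eps}(\tau_e\omega)\mid\mathcal B_{k-1}\}$, which are \emph{not} closed-loop gradients for any finite $k$; the paper then needs the nontrivial Step 2 (uniform-in-$k$ boundedness via the reversed-edge trick $H(\omega,-e)=-H(\tau_{-e}\omega,e)$), Step 3 (weak limit, closed-loop established by a conditional-expectation calculation and the tower property) and Step 4 (Mazur's theorem). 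Your proposal replaces all of this by taking $G_\eps=G_{g_\eps}$ directly, observing closed-loop and boundedness trivially, and then subtracting an affine function $c\cdot e$ to kill the mean. The antisymmetry $c_{-e}=-c_e$ does follow from the Palm-type identity, so the affine ansatz is at least admissible. But you assert $|c|=o(1)$ as $\eps\downarrow 0$, and there is no argument for this: $c_e=\E_0[(\1_{\{\omega(-e)=1\}}-\1_{\{\omega(e)=1\}})g_\eps(\omega)]$ is controlled only by $\|g_\eps\|_\infty$, which has no reason to stay bounded, let alone tend to $0$, along a sequence of near-minimizers of $\inf_g\Lambda(f,G_g)$. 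Without a quantitative bound on $|c|$ the correction may push $\Lambda(f,G_\eps-c\cdot e)$ far above $\overline H(f)+\eps$, and the proposition is not proved. This is precisely the difficulty the paper's filtration-plus-weak-limit machinery is designed to circumvent.

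In short: your strategy is the right one, but you have replaced the two genuinely hard steps — compactness for the min-max and the construction of a mean-zero closed-loop gradient — with assertions that are not justified and, as stated, appear false in general.
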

\begin{proof}
We split the proof into several steps.

{\bf{Step 1: Convex analysis.}}

We recall the definition of pairs $\mathcal E$ from \eqref{invdensity} and note that
$\overline H(f)$ can be rewritten as 
\begin{equation}\label{equiv1}
\sup_{\phi}\sup_{\tilde\pi\in \widetilde\Pi}\inf_{g}\bigg[\int \d\P_0(\omega)\,\phi(\omega) \bigg\{\sum_{e\in \mathbb U_d}  \tilde\pi(\omega,e) \bigg(f(\omega,e) - \log \frac{\tilde\pi(\omega,e)}{\pi_\omega(0,e)}\bigg)-\big\{g(\tau_e\omega)-g(\omega)\big\}\bigg\}\bigg]
\end{equation}
since, by \eqref{invdensity},
$$
\inf_g\int \d\P_0\phi(\omega) \sum_{e\in \mathbb U_d} \tilde\pi(\omega,e)\bigg\{g(\tau_e\omega)-g(\omega)\bigg\}=
\begin{cases}
0\quad \mathrm{if}\,(\phi,\tilde\pi)\in \mathcal E \\
-\infty \quad \mbox{else,}
\end{cases}
$$
with the infimum over $g$ being taken over all bounded $\mathcal B$- measurable real functions. 

In order to proceed with the variational analysis, we would like to swap the order of $\sup_{\tilde\pi\widetilde\Pi}$ and $\inf_g$ in \eqref{equiv1} by invoking a {\it{min-max}} theorem which requires a compactness argument.
We choose a sequence of finite $\sigma$-algebras $(\mathcal B_k)_{k\geq 1}$ so that, for each $k$, $\mathcal B_k$ contains information about
all open (and hence closed) bonds in the box of size $k$ around the origin. Then, for each $k\geq 1$, $\pi_{\cdot}(0,e)$ is measurable with respect to $\mathcal B_k$, for each $e\in \mathbb U_d$. Furthermore, $\mathcal B_k\subset\tau_e \mathcal B_{k+1}$ for all $e\in \mathbb U_d$ and $\mathcal B= \sigma(\cup_{k\geq 1} \mathcal B_k)$, where $\mathcal B$ is the Borel $\sigma$- algebra.

For each $k\geq 1$ we can restrict both the supremums in \eqref{equiv1} to $\mathcal B_k$-measurabale probability densities $\phi$ and $\mathcal B_k$-measurable $\tilde\pi\in \widetilde\Pi$ and get a further lower bound 
$$
\begin{aligned}
 \overline H(f) \geq &\sup_{\phi}\sup_{\tilde\pi\in \widetilde\Pi}\inf_{g}\bigg[\int \d\P_0(\omega)\,\phi(\omega) \\
 &\qquad\qquad\qquad\bigg\{\sum_{e\in \mathbb U_d}  \tilde\pi(\omega,e) \bigg(f(\omega,e) - \log \frac{\tilde\pi(\omega,e)}{\pi_\omega(0,e)}\bigg)-\big\{g(\tau_e\omega)-g(\omega)\big\}\bigg\}\bigg],
 \end{aligned}
 $$
Since each $\mathcal B_k$ is finite, the supremum over $\tilde\pi$ is taken over a compact set. Further, since the integral above is concave and continuous in $\tilde\pi$ and linear (in particular, convex) in $g$, a min-max argument (\cite{F53}) 
allows us to change the order of $\sup_{\tilde\pi\in \widetilde\Pi}$ and $\inf_{g}$, leading to  
\begin{equation}\label{equiv2}
\begin{aligned}
\overline H(f) \geq &\sup_{\phi}\inf_{g}\sup_{\tilde\pi\in \widetilde\Pi}\bigg[\int \d\P_0(\omega)\,\phi(\omega) \\
 &\qquad\qquad\qquad\bigg\{\sum_{e\in \mathbb U_d}  \tilde\pi(\omega,e) \bigg(f(\omega,e) - \log \frac{\tilde\pi(\omega,e)}{\pi_\omega(0,e)}\bigg)-\big\{g(\tau_e\omega)-g(\omega)\big\}\bigg\}\bigg].
 \end{aligned}
 \end{equation}
 We can take conditional expectation of the integrand above with respect to $\mathcal B_k$ and use that both $\phi$ and $\tilde\pi$ are $\mathcal B_k$-measurable to rewrite the right hand side above as
 \begin{equation}\label{equiv2mid}
 \begin{aligned}
&  \sup_{\phi}\inf_{g}\sup_{\tilde\pi\in \widetilde\Pi}\bigg[\int \d\P_0(\omega)\,\phi(\omega) 
  \bigg\{\sum_{e\in \mathbb U_d}  \tilde\pi(\omega,e) \big\{h(\omega,e)-\log \tilde\pi(\omega,e)\big\}\bigg\}\bigg] ,
  \end{aligned}
\end{equation}
where
 $$
 \begin{aligned}
h(\omega,e)&=\E_0\big\{\log \pi_\omega(0,e) +f(\omega,e)+g(\omega)-g(\tau_e\omega)\big|\mathcal B_k\big\} \\
&=\log \pi_\omega(0,e) +\E_0\big\{f(\omega,e)+g(\omega)-g(\tau_e\omega)\big|\mathcal B_k\big\},
\end{aligned}
$$  
and we used that by our choice, $\pi$ is also $\mathcal B_k$- measurable. Staring at \eqref{equiv2mid} we note the local dependence of the integrand on $\tilde\pi$ allowing us to bring the supremum over $\tilde\pi$ inside the integral, leading to 
$$
\overline H(f)\geq 
\sup_{\phi}\inf_{g}\bigg[\int \d\P_0(\omega)\,\phi(\omega) 
  \sup_{\tilde\pi\in \widetilde\Pi} \bigg\{\sum_{e\in \mathbb U_d}  \tilde\pi(\omega,e) \big\{h(\omega,e)-\log \tilde\pi(\omega,e)\big\}\bigg\}\bigg].
$$

A direct Lagrange multiplier computation shows that the supremum in the integrand above is attained at 
$$
\begin{aligned}
\tilde\pi(\omega,e)&= \frac{\e^{h(\omega,e)}}{\sum_{e\in \mathbb U_d}  \e^{h(\omega,e)}} 
\\
&=\frac{\pi_\omega(0,e) \,\,\exp\big\{\E_0\big(f(\omega,e)+g(\omega)-g(\tau_e\omega)\big|\mathcal B_k\big)\big\}}
{\sum_{|e|=1}  \pi_\omega(0,e) \,\,\exp\big\{\E_0\big(f(\omega,e)+g(\omega)-g(\tau_e\omega)\big|\mathcal B_k\big)\big\}} \in \widetilde\Pi(\omega,e).
\end{aligned}
$$
Replacing this value in the integrand leads to the lower bound
\begin{equation}\label{equiv3}
\begin{aligned}
\overline H(f) &\geq \sup_{\phi}\inf_{g}\bigg[\int \d\P_0(\omega)\,\phi(\omega)  \\
&\qquad\qquad\qquad \log \sum_{e\in \mathbb U_d} \pi_\omega(0,e) \e^{\E_0\big\{f(\omega,e)+ g(\omega)-g(\tau_e\omega)| \mathcal B_k\big\}}\bigg]
\end{aligned}
\end{equation}
Again using a similar min-max argument to change to the order of supremum and the infimum and subsequently replacing the supremum over $\phi$ with the integral $\int \d \P_0 \phi$ by $\mathrm{ess}\sup_{\omega-\P_0}$ we arrive at the lower bound 
$$
\overline H(f) \geq \inf_g \mathrm{ess}\sup_{\omega-\P_0}\bigg\{ \log \sum_{e\in \mathbb U_d} \pi_\omega(0,e) \e^{\E_0\big\{f(\omega,e)+ g(\omega)-g(\tau_e\omega)| \mathcal B_k\big\}}\bigg\} .
$$
Note that, we can restrict the above lower bound to
\begin{equation}\label{equiv4}
\overline H(f) \geq \inf_g \mathrm{ess}\sup_{\omega-\P_0}\bigg\{ \log \sum_{e\in \mathbb U_d} \, \1_{\{\omega(e)=1\}} \pi_\omega(0,e) \e^{\E_0\big\{f(\omega,e)+ g(\omega)-g(\tau_e\omega)| \mathcal B_k\big\}}\bigg\} .
\end{equation}

\medskip\noindent

{\bf{Step 2: Approximate gradient and uniform boundedness.}}

This implies, that, for every $\eps>0$ and $k\geq 1$, there exists a bounded $\mathcal B$ measurable function $g_{k,\eps}$ so that, for $\P_0$- almost every $\omega \in \Omega_0$, and $e\in \mathbb U_d$,
\begin{equation} \label{equiv5}
 \1_{\{\omega(e)=1\}} \E_0\big\{ g_{k,\eps}(\omega)-g_{k,\eps}(\tau_e\omega)| \mathcal B_k\big\} \leq -\1_{\{\omega(e)=1\}} \log \pi_\omega(0,e) + \|f\|_\infty+ \overline H(f) + \eps.
 \end{equation}
 We set, 
 \begin{equation}\label{gradcorrect}
 G_{k,\eps}(\omega,e)=  \1_{\{\omega(e)=1\}} \E_0\big\{ g_{k,\eps}(\omega)-g_{k,\eps}(\tau_e\omega)| \mathcal B_{k-1}\big\}.
 \end{equation}
We would like to show that, for every $\eps>0$, the family $\{G_{k,\eps}(\cdot,e)\}_{k\geq 1}$ is uniformly bounded in the essential supremum norm.  

First, by taking conditional expectation on both sides of \eqref{equiv5} with respect to $\mathcal B_{k-1}$ we have
 \begin{equation}\label{equiv6}
 \begin{aligned}
 G_{k,\eps}(\omega,e) &\leq -\E_0\big\{\1_{\{\omega(e)=1\}} \log \pi_\omega(0,e)| \mathcal B_{k-1}\big\} + \|f\|_\infty+ \Gamma(f) + \eps  \\
 &=-\1_{\{\omega(e)=1\}} \log \pi_\omega(0,e) + \|f\|_\infty+ \overline H(f) + \eps\\
 &\leq \log(2d)+ \|f\|_\infty+ \overline H(f) + \eps,
  \end{aligned}
  \end{equation}
since the random walk transition probabilities are bounded away from zero on the event $\{\omega(e)=1\}$. We can also reverse the argument to get a lower bound: Indeed, if $H(\omega,e)=H_{k,\eps}(\omega,e)=g_{k,\eps}(\omega)-g_{k,\eps}(\tau_e\omega)$, then $H(\omega,-e)=- H(\tau_{-e}\omega,e)$.
Now applying \eqref{equiv5} again for the edge $-e$, we have
\begin{equation}\label{equiv7}
\begin{aligned}
\Gamma(f)+ \|f\|_\infty - \1_{\{\omega(-e)=1\}} \log \pi_\omega(0,-e) + \eps &\geq \E_0 \big\{H(\omega,-e)| \mathcal B_k\big\} \1_{\{\omega(-e)=1\}} \\
&= \E_0 \big\{-\1_{\{(\tau_{-e}\omega)(e)=1\}}H(\tau_{-e}\omega,e)| \mathcal B_k\big\} \\
&{=} \E_0 \big\{-\1_{\{\omega(e)=1\}}H(\omega,e)| \tau_e\mathcal B_k\big\},
\end{aligned}
\end{equation}
where the second equality follows from the symmetry of the conductances. Again taking conditional expectation on both sides with respect to $\mathcal B_{k-1}$ and recalling that $\mathcal B_{k-1}\subset \tau_e \mathcal B_k$, we have
\begin{equation}\label{equiv8}
\begin{aligned}
-G_{k,\eps}(\omega,e) &\leq -\E_0\big\{\1_{\{\omega(-e)=1\}} \log \pi_\omega(0,-e)| \mathcal B_{k-1}\big\} + \|f\|_\infty+ \overline H(f) + \eps \\
 &=-\1_{\{\omega(-e)=1\}} \log \pi_\omega(0,-e) + \|f\|_\infty+ \overline H(f) + \eps\\
 &\leq \log(2d)+\|f\|_\infty+ \overline H(f) + \eps,
  \end{aligned}
 \end{equation}
 since the transition probabilities are again bounded away from zero on the event $\{\omega(-e)=1\}$. We combine \eqref{equiv6} and \eqref{equiv8} to conclude that, for some non-random constant $M<\infty$,
 \begin{equation}\label{unifbd}
\mathrm{ess}\sup_{\omega-\P_0}G_{k,\eps}(\omega,e) \leq M \quad\forall k\geq 1.
\end{equation}

 {\bf{Step 3: Convergence to the gradient $G_\eps\in \mathcal G_\infty$.}}
 
 Since $\{G_{k,\eps}(\cdot,e)\}_{k\geq 1}$ is a uniformly bounded family, it is weakly compact and weakly converges, possibly along some subsequence, 
 to some $G_\eps(\cdot,e)$, which clearly has $\E_0$ expectation zero and is also uniformly bounded in the essential supremum norm. 
 
 Let $(x_0,\dots,x_n)$ be a closed loop on $\mathcal C_\infty$ (i.e., $x_0,x_1,\dots, x_n$ is 
 a nearest neighbor occupied path so that $x_0=x_n$). Note that, weak convergence preserves conditional expectation. Then, for any fixed $l\geq 1$,
 $$
 \begin{aligned}
&\mathbb E_0 \bigg\{\sum_{j=0}^{n-1} G_\eps(\tau_{x_j}\omega, x_{j+1}-x_j)\bigg| B_l\bigg\}\\
&=\mathbb E_0 \bigg\{\sum_{j=0}^{n-1} \lim_{k\to\infty}G_{k,\eps}(\tau_{x_j}\omega, x_{j+1}-x_j)\bigg| B_l\bigg\}\\
&=\sum_{j=0}^{n-1} \lim_{k\to\infty} \E_0\bigg\{\1_{\big\{(\tau_{x_j}\omega)(x_{j+1}-x_j)=1\big\}} \E_0\bigg(g_{k,\eps}(\omega)-g_{k,\eps}(\tau_{x_{j+1}-x_j}\omega)\big| \mathcal B_{k-1}\bigg)\big(\tau_{x_j}(\omega)\big)\bigg| B_l\bigg\} \\
&=\sum_{j=0}^{n-1} \lim_{k\to\infty} \E_0\bigg\{\1_{\big\{\omega(x_{j+1}-x_j)=1\big\}} \E_0\bigg(g_{k,\eps}(\tau_{x_j}\omega)-g_{k,\eps}(\tau_{x_{j+1}}\omega)\big| \tau_{-x_j}\mathcal B_{k-1}\bigg)\bigg| B_l\bigg\}\\
&=\sum_{j=0}^{n-1} \lim_{k\to\infty} \E_0\bigg\{ \E_0\bigg(g_{k,\eps}(\tau_{x_j}\omega)-g_{k,\eps}(\tau_{x_{j+1}}\omega)\big| \tau_{-x_j}\mathcal B_{k-1}\bigg)\bigg| B_l\bigg\}.
\end{aligned}
$$
For $k$ large enough, $B_l\subset \tau_{-x_j}\mathcal B_{k-1} $ and hence, by the tower property the last term equals
$$
\begin{aligned}
&\sum_{j=0}^{n-1} \lim_{k\to\infty} \E_0\bigg(g_{k,\eps}(\tau_{x_j}\omega)-g_{k,\eps}(\tau_{x_{j+1}}\omega)\bigg| B_l\bigg)\\
&= \lim_{k\to\infty}\sum_{j=0}^{n-1} \E_0\bigg(g_{k,\eps}(\tau_{x_j}\omega)-g_{k,\eps}(\tau_{x_{j+1}}\omega)\bigg| B_l\bigg)\\
&=0.
\end{aligned}
$$
This implies that, 
$$
\sum_{j=0}^{n-1} G_\eps(\tau_{x_j}\omega, x_{j+1}-x_j) =0 \qquad \P_0-\,\mbox{almost surely.}
$$
We conclude that $G_\eps\in \mathcal G_\infty$.

\medskip\noindent

{\bf{Step 4: Conclusion.}}

Recall the lower bound \eqref{equiv4}. Since $\E_0(f(\omega,e)|\mathcal B_{k-1})$ is a bounded martingale, it converges almost surely to $f(\cdot,e)$. Hence, 
$$
\E_0\big(\1_{\{\omega(e)=1\}} \log \pi_\omega(0,e) + f(\omega, e)| \mathcal B_{k-1}\big) + G_{k,\eps}(\omega,e) 
$$
converges weakly in $L^p(\P_0)$ for any $p>1$ to $\1_{\{\omega(e)=1\}} \log \pi_\omega(0,e) + f(\omega, e)+ G_\eps(\cdot,e)$. By Mazur's theorem, there exists a finite convex combination of the above terms which converges strongly to $\1_{\{\omega(e)=1\}} \log \pi_\omega(0,e) + f(\omega, e)+ G_\eps(\cdot,e)$ in $L^p(\P_0)$ and hence, along a further subsequence to $\1_{\{\omega(e)=1\}} \log \pi_\omega(0,e) + f(\omega, e)+ G_\eps(\cdot,e)$, 
$\P_0$- almost surely. 

We take conditional expectation on both sides of \eqref{equiv4} with respect to $\mathcal B_{k-1}$ and use Jensen's inequality to get
$$
\log \sum_{e\in\mathbb U_d} \exp \big\{ \1_{\{\omega(e)=1\}} \log \pi_\omega(0,e) + \E_0\big(f(\omega, e)| \mathcal B_{k-1}\big) + G_{k,\eps}(\omega,e) \big\} \leq \overline H(f) + \eps.
$$
Again, applying Jensen's inequality to the aforementioned convex combination, and subsequently taking $k\to \infty$, we get
$$
\log \sum_{e\in\mathbb U_d} \1_{\{\omega(e)=1\}} \log \pi_\omega(0,e) \exp \big\{f(\omega, e)+ G_{\eps}(\omega,e) \big\} \leq \overline H(f) + \eps.
$$
This finishes the proof of Proposition \ref{propequal}.

 \end{proof}

\section{Large deviation bounds}
We start with a lemma which proves an important property of of the functional $\mathfrak I$ defined in \eqref{Idef}.
\begin{lemma}\label{convexI}
$\mathfrak I$ and $\mathfrak I^{\star\star}$ are convex in $\Mcal_1(\Omega_0 \times  \mathbb U_d)$. 
\end{lemma}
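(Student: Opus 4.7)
The plan is to establish convexity of $\mathfrak I$ by recognizing its integrand as a jointly convex function of two quantities that are both affine in $\mu$; convexity of $\mathfrak I^{\star\star}$ will then be automatic from the structure of the double Fenchel--Legendre transform.

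First I would rewrite $\mathfrak I$ on $\Mcal_1^\star$ in a form that makes the convexity transparent. Writing $f_\mu(\omega,e)$ for the density of $\mu$ with respect to $\P_0$ tensor counting measure on $\mathbb U_d$, the marginal density is $\phi_\mu(\omega) := \sum_{e\in\mathbb U_d} f_\mu(\omega,e) = \d(\mu)_1/\d\P_0$, and unfolding \eqref{Idef} gives
$$
\mathfrak I(\mu) = \int_{\Omega_0} \d\P_0(\omega) \sum_{e\in\mathbb U_d} h\bigl(f_\mu(\omega,e),\, \phi_\mu(\omega)\pi_\omega(0,e)\bigr),
$$
where $h(x,y) = x\log(x/y)$, with the usual conventions $0\log 0 = 0$ and $0\log(0/0) = 0$.

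Next I would record three basic facts. (i) The set $\Mcal_1^\star$ is convex: the defining conditions in \eqref{relevantmeasures}---matching marginals, absolute continuity with respect to $\P_0$, and the support characterization $\d\mu/\d(\mu)_1>0 \Leftrightarrow \omega(e)=1$---are all stable under convex combinations, since any two elements of $\Mcal_1^\star$ share the same $\P_0$-almost sure support $\{\omega(e)=1\}$. (ii) The function $h$ is jointly convex on $[0,\infty)^2$; this is the classical joint convexity of the Kullback--Leibler integrand, verifiable either via the log-sum inequality or by a direct Hessian computation. (iii) For $\mu_t = (1-t)\mu_0 + t\mu_1$, both arguments $f_{\mu_t}(\omega,e)$ and $\phi_{\mu_t}(\omega)\pi_\omega(0,e)$ are affine in $t$ (the second because $\phi_{\mu_t} = (1-t)\phi_{\mu_0} + t\phi_{\mu_1}$ and $\pi_\omega(0,e)$ is fixed).

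Combining these, pointwise joint convexity of $h$ together with integration yields $\mathfrak I(\mu_t) \leq (1-t)\mathfrak I(\mu_0) + t\mathfrak I(\mu_1)$ whenever $\mu_0,\mu_1\in\Mcal_1^\star$; if either lies outside $\Mcal_1^\star$, the right-hand side is $+\infty$ and the inequality is trivial, so convexity of $\mathfrak I$ on all of $\Mcal_1(\Omega_0\times\mathbb U_d)$ follows. Convexity of $\mathfrak I^{\star\star}$ is then immediate from the definition: $\mathfrak I^\star(f) = \sup_\mu\{\langle f,\mu\rangle - \mathfrak I(\mu)\}$ is a supremum of affine functions of $f$ and hence convex, and dually $\mathfrak I^{\star\star}(\mu)$ is a supremum of affine functions of $\mu$ and hence convex. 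The only real obstacle is notational bookkeeping around the ``moving reference'' $\phi_\mu(\omega)\pi_\omega(0,e)$---one must recognize it as an affine function of $\mu$ so that joint (rather than merely separate) convexity of $h$ is what is needed. Beyond that, there is no substantive analytic difficulty.
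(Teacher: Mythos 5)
Your proof is correct, and it takes a slightly different (and arguably cleaner) route than the paper. The paper's proof first invokes Lemma~\ref{onetoone} to decompose $\mu$ and $\nu$ into pairs $(\tilde\pi_\mu,\phi_\mu)$, $(\tilde\pi_\nu,\phi_\nu)\in\mathcal E$, then observes that the convex combination $\lambda=x\mu+(1-x)\nu$ corresponds to $\phi=x\phi_\mu+(1-x)\phi_\nu$ and $\tilde\pi=y\tilde\pi_\mu+(1-y)\tilde\pi_\nu$ with the $\omega$-dependent weight $y=\frac{x\phi_\mu}{x\phi_\mu+(1-x)\phi_\nu}$, and finally applies Jensen's inequality for $t\mapsto t\log t$ pointwise. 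You instead bypass the $(\tilde\pi,\phi)$ parametrization entirely: you express $\mathfrak I(\mu)=\int\d\P_0\sum_e h\bigl(f_\mu(\omega,e),\,\phi_\mu(\omega)\pi_\omega(0,e)\bigr)$ with $h(x,y)=x\log(x/y)$, note both arguments of $h$ are affine in $\mu$, and invoke the \emph{joint} convexity of the relative-entropy integrand. The two arguments are mathematically equivalent — the paper's weighted Jensen step is precisely the joint convexity of $h$ in disguise — but your version is more transparent conceptually and avoids the bookkeeping with the weight $y$. You also explicitly verify that $\Mcal_1^\star$ is convex (needed to justify the reduction to $\mu_0,\mu_1\in\Mcal_1^\star$), which the paper leaves implicit, and you record that $\mathfrak I^{\star\star}$ is convex as a biconjugate, which the paper omits from its proof altogether. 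One small caveat: strictly speaking the shared-support argument for convexity of $\Mcal_1^\star$ uses that $(\mu)_1\sim\P_0$ (not just $\ll$) for $\mu\in\Mcal_1^\star$, which follows from Theorem~\ref{ergodicthm} via Lemma~\ref{onetoone}; it would be worth flagging that you are using this.
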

\begin{proof}
Fix $x\in (0,1)$ and $\mu, \nu \in \Mcal_1(\Omega_0 \times  \mathbb U_d)$. It is enough to show that if $\mu, \nu \in \Mcal_1^\star$, then $\mathfrak I(x \mu+ (1-x) \nu) \leq x \mathfrak I(\mu) + (1-x) \mathfrak I(\nu)$.

For $\mu,\nu\in\Mcal_1^\star$, we define,
$$
\begin{aligned}
(\tilde \pi_\mu, \phi_\mu)&= \bigg(\frac {\d\mu}{\d (\mu)_1}, \frac{\d (\mu)_1}{\d \P_0}\bigg), \\
(\tilde \pi_\nu, \phi_\nu)&= \bigg(\frac {\d\nu}{\d (\nu)_1}, \frac{\d (\nu)_1}{\d \P_0}\bigg).
\end{aligned}
$$
Then, by Lemma \ref{onetoone}, $(\tilde \pi_\mu, \phi_\mu), (\tilde \pi_\mu, \phi_\mu)\in \mathcal E$ and
$$
\begin{aligned}
\mathfrak I(\mu)= \int \d \P_0 \phi_\mu(\omega) \sum_e \tilde \pi_\mu(\omega,e) \, \log \frac{\tilde\pi_\mu(\omega,e)}{\pi_\omega(0,e)},\\
\mathfrak I(\nu)=\int \d \P_0 \phi_\nu(\omega) \sum_e \tilde \pi_\nu(\omega,e) \, \log \frac{\tilde\pi_\nu(\omega,e)}{\pi_\omega(0,e)}.
\end{aligned}
$$ 
Denote by $\lambda= x \mu+ (1-x) \nu$. Then
$(\tilde \pi, \phi)= \big(\frac {\d\lambda}{\d (\lambda)_1}, \frac{\d (\lambda)_1}{\d \P_0}\big)$ where
$$
\phi= x \phi_\mu+ (1-x) \phi_\nu , \qquad \tilde\pi= y \tilde \pi_\mu+ (1-y) \tilde \pi_\nu,
$$
with $y(\cdot)= \frac {x \phi_\mu(\cdot)} {x \phi_\mu(\cdot)+ (1-x) \phi_\nu(\cdot)}$. Then,
$$
\begin{aligned}
\mathfrak I(\lambda)&= \mathfrak I(x \mu+ (1-x) \nu)\\
&=\int \d \P_0 \phi(\omega) \sum_e \tilde \pi(\omega,e) \, \log \frac{\tilde\pi(\omega,e)}{\pi_\omega(0,e)}\\
&\leq x \bigg\{\int \d \P_0 \phi_\mu(\omega) \sum_e \tilde \pi_\mu(\omega,e) \, \log \frac{\tilde\pi_\mu(\omega,e)}{\pi_\omega(0,e)}\bigg\}+ (1-x) \bigg\{\int \d \P_0 \phi_\nu(\omega) \sum_e \tilde \pi_\nu(\omega,e) \, \log \frac{\tilde\pi_\nu(\omega,e)}{\pi_\omega(0,e)}\bigg\}\\
&= x \mathfrak I(\mu)+ (1-x) \mathfrak I(\nu),
\end{aligned}
$$
where we used convexity of the function $f\mapsto f\log f$ and Jensen's inequality for the upper bound.
\end{proof}
We end with the proof of Theorem \ref{thmlevel2} and Corollary \ref{thmlevel1}.

{\it{Proof of Theorem \ref{thmlevel2}:}} By Theorem \ref{thmmomgen}, 
$$
\lim_{n\to\infty} \frac 1n \log E^{\pi,\omega}_0 \big\{\exp\big\{n \langle f, \mathcal L_n\rangle\big\}\big\} = \sup_{\mu\in\Mcal_1^\star} \big\{ \langle f, \mu\rangle- \mathfrak I(\mu)\big\}= \sup_{\mu\in\Mcal_1(\Omega_0 \times  \mathbb U_d)} \big\{ \langle f, \mu\rangle- \mathfrak I(\mu)\big\}= \mathfrak I^\star(\mu).
$$
Since $\Omega_0$ is a closed subset of $\Omega=\{0,1\}^{\mathbb B_d}$ and hence, is compact, $\Mcal_1(\Omega_0 \times  \mathbb U_d)$ is compact in the weak topology. The upper bound \eqref{ldpub} for all closed sets now follows from
Theorem 4.5.3 \cite{DZ98}. The lower bound \eqref{ldplb} has been proved by Lemma \ref{lemmalb}.
\qed

{\it{Proof of Corollary \ref{thmlevel1}:}} The claim follows by contraction principle 
once we show that $\inf_{\xi(\mu)=x} \mathfrak I(\mu)= \inf_{\xi(\mu)=x} \mathfrak I^{\star\star}(\mu)$. This is easy to check using
convexity of $\mathfrak I$ and $\mathfrak I^{\star\star}$.
\qed

We finally prove that $\mathfrak I$ is not lower semicontinuous. 

\begin{lemma}\label{nonlsc}
Let $d\geq 2$ and $p>p_c(d)$. Then $\mathfrak I$ is not lower-semicontinuous on $\Mcal_1(\Omega_0\times \mathbb U_d)$. Hence, $\mathfrak I\ne \mathfrak I^{\star\star}$.
\end{lemma}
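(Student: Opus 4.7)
The plan is to exploit the uniform boundedness of $\mathfrak I$ on its effective domain $\Mcal_1^\star$, combined with the non-closedness of $\Mcal_1^\star$ in the weak topology.

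First I would verify the uniform bound $\mathfrak I(\mu) \leq \log(2d)$ for every $\mu \in \Mcal_1^\star$. By Lemma \ref{onetoone}, there is $(\tilde\pi, \phi) \in \mathcal E$ with $\d\mu(\omega, e) = \tilde\pi(\omega, e) \phi(\omega) \d\P_0(\omega)$, so the inner sum in $\mathfrak I(\mu)$ is the relative entropy of $\tilde\pi(\omega, \cdot)$ with respect to $\pi_\omega(0, \cdot)$. Both are probability measures supported exactly on the open edges $\{e : \omega(e) = 1\}$, and since $\pi_\omega(0, e) = 1/\deg_\omega(0)$ there, this inner relative entropy equals $\log \deg_\omega(0) - H(\tilde\pi(\omega, \cdot)) \leq \log \deg_\omega(0) \leq \log(2d)$. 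Integration against the probability density $\phi \d\P_0$ yields the claim.

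Consequently, to prove the failure of lower semicontinuity it suffices to exhibit a sequence $(\mu_n) \subset \Mcal_1^\star$ converging weakly to some $\mu \in \Mcal_1(\Omega_0 \times \mathbb U_d) \setminus \Mcal_1^\star$: then $\mathfrak I(\mu) = \infty$ while $\liminf_n \mathfrak I(\mu_n) \leq \log(2d) < \infty$. Since $\mathfrak I^{\star\star}$ is automatically lower semicontinuous (as a double Fenchel-Legendre transform, it is the supremum of a family of continuous affine functionals), the announced inequality $\mathfrak I \neq \mathfrak I^{\star\star}$ follows at once from any point where $\mathfrak I$ fails to be lower semicontinuous.

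To construct such a sequence I would fix $\omega_0 \in \Omega_0$ and, for each $n$, let $A_n \subset \Omega_0$ be the positive-$\P_0$-probability cylinder event that $\omega$ matches $\omega_0$ on every edge inside the box $B_n$. The aim is to produce $\tilde\pi_n \in \widetilde\Pi$ whose transitions strongly bias the environment chain toward configurations resembling $\omega_0$ on larger and larger windows, while preserving the strict support condition \eqref{pitilde}. Theorem \ref{ergodicthm} then supplies a unique $\P_0$-absolutely continuous invariant density $\phi_n$, and with the bias calibrated appropriately the probability $\phi_n \d\P_0$ places an arbitrarily large fraction of its mass on $A_n$ as $n \to \infty$. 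Setting $\mu_n := \tilde\pi_n \phi_n \d\P_0 \in \Mcal_1^\star$ and extracting a weakly convergent subsequence by compactness of $\Mcal_1(\Omega_0 \times \mathbb U_d)$, the limit $\mu$ has $(\mu)_1$ supported on the orbit of $\omega_0$ under lattice shifts and mutually singular with the atomless $\P_0$, forcing $\mu \notin \Mcal_1^\star$.

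The main obstacle is the last construction: producing a kernel $\tilde\pi_n \in \widetilde\Pi$ whose unique invariant density from Theorem \ref{ergodicthm} concentrates sharply near $\omega_0$. The strict support requirement \eqref{pitilde} leaves only $\deg_\omega(0)$ positive degrees of freedom per environment, coupled globally through the shift structure of the invariance equation \eqref{invdensity}, so a naive Metropolis recipe is not directly available (our chain admits no holding probability). Nevertheless the construction should be achievable by taking a convex combination $\tilde\pi_n = (1-\alpha_n)\pi + \alpha_n \beta_n$ with $\alpha_n \uparrow 1$ and $\beta_n \in \widetilde\Pi$ designed to favor steps into $A_n$-compatible configurations, and then invoking a perturbation/compactness argument to locate the $\tilde\pi_n$-invariant density and verify its concentration at $\omega_0$.
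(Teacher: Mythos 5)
Your opening observation is correct and tidy: on its effective domain $\Mcal_1^\star$, the inner sum in $\mathfrak I$ is a relative entropy of two measures supported on the same set of $\deg_\omega(0)\leq 2d$ open edges, with $\pi_\omega(0,\cdot)$ uniform there, so $\mathfrak I\leq\log(2d)$ uniformly. Hence, as you say, it suffices to exhibit $\mu_n\in\Mcal_1^\star$ with $\mu_n\Rightarrow\mu\notin\Mcal_1^\star$. You have also correctly reduced $\mathfrak I\neq\mathfrak I^{\star\star}$ to the failure of lower semicontinuity.

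The genuine gap is in the construction, which you yourself flag as ``the main obstacle,'' and it does not close. The central problem is your appeal to Theorem \ref{ergodicthm} to ``supply'' an invariant density $\phi_n$: that theorem is a \emph{conditional} statement (uniqueness and ergodicity \emph{given} that a $\P_0$-absolutely-continuous invariant measure exists); it asserts nothing about existence. Indeed the whole difficulty, and the reason this lemma is true at all, is that for many $\tilde\pi\in\widetilde\Pi$ no such invariant density exists. Without an existence argument you have no $\phi_n$, hence no $\mu_n\in\Mcal_1^\star$, hence no sequence. Moreover, even granting existence, the claim that you can calibrate a kernel $\tilde\pi_n=(1-\alpha_n)\pi+\alpha_n\beta_n$ so that its invariant density concentrates mass near a fixed $\omega_0$ is not substantiated; the invariance equation \eqref{invdensity} couples the density globally along the orbit of $\omega$, and nothing in your sketch controls the resulting fixed point, especially while maintaining the support requirement \eqref{pitilde} as $\alpha_n\uparrow 1$.

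The paper takes a different and fully concrete route to non-closedness of $\Mcal_1^\star$: it uses the directionally biased kernel $\pi^{(\beta)}$ and the known fact (Berger--Gantert--Peres, Sznitman) that for $\beta$ large enough the biased walk on the cluster has zero limiting speed. By Kesten's lemma, zero speed precludes the existence of any $L^1(\P_0)$ invariant density for $\pi^{(\beta)}$, giving an explicit kernel outside the range of Lemma \ref{onetoone}. The argument then splits into two cases (a neighborhood of $\pi^{(\beta)}$ entirely lacking invariant densities, leading to a super-exponential-decay contradiction via Theorem \ref{thmlevel2}; or a sequence $\tilde\pi_n\to\pi^{(\beta)}$ admitting invariant densities, whose associated $\mu_n$ converge to a $\mu_\beta\notin\Mcal_1^\star$). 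If you want to salvage your argument, the cleanest repair is to replace your $\omega_0$-concentration construction by this biased-walk input, keeping your uniform-bound framing; otherwise you must separately establish existence of the $\phi_n$ and quantitative concentration, neither of which your sketch provides.
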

\begin{proof}
For any $\beta>1$, we define
$$
\pi^{\ssup \beta}(\omega,e)= \frac{\psi(e) \1_{\{\omega(e)=1\}}} {\sum_{e^\prime\in \mathbb U_d}\psi(e^\prime) \1_{\{\omega(e^\prime)=1\}}} \in \widetilde\Pi,
$$
where
$$
\psi(e)=
\begin{cases}
\beta>1 \qquad\mbox{if } e=e_1,\\
1 \qquad\qquad\mbox{else}.
\end{cases}
$$
Let $X_n^{\ssup \beta}$ be the Markov chain with transition probabilities $\pi^{\ssup\beta}$. By \cite{BGP02} and \cite{Sz02}, there exists $\beta_u=\beta_u(p,d)>0$ so that for $\beta>\beta_u$, the limiting speed 
$$
\lim_{n\to\infty} \frac{X_n^{\ssup \beta}}{n},
$$
 which exists and is an almost sure constant (see \cite{BGP02} and \cite{Sz02}), is zero. Then, by Kesten's lemma (see \cite{k75}),
there exists no $\phi\in L^1(\P_0)$ so that $(\pi^{\ssup\beta},\phi)\in \mathcal E$. We split the proof into two cases.

Suppose there exists a neighborhood $\mathfrak u$ of $\pi^{\ssup\beta}$ so that every $\tilde\pi^{\ssup\beta} \in \overline {\mathfrak u}$ fails to have an invariant density. Then, 
for any $\tilde\pi^{\ssup\beta} \in \overline {\mathfrak u}$ and any probability density $\phi\in L^1(\P_0)$, let $\mu_\beta$ be the corresponding element in $\Mcal_1(\Omega_0\times\mathbb U_d)$ (i.e., 
$\d\mu_\beta(\omega,e)=\pi^{\ssup\beta}(\omega,e)\phi(\omega)\d\P_0(\omega)$). Since 
$$(\tilde\pi^{\ssup\beta},\phi)\notin\mathcal E,
$$
by Lemma \ref{onetoone}, $\mu_\beta\notin   \Mcal_1^\star$. Then, $\mathfrak I(\mu_\beta)=\infty$ by \eqref{Idef}. If $\mathfrak I$ were lower semicontinuous on $\Mcal_1(\Omega_0\times \mathbb U_d)$, then $\mathfrak I=\mathfrak I^{\star\star}$ and by Theorem \ref{thmlevel2},
\begin{equation}\label{eq:superexp}
P^{\pi,\omega}_0\big\{\mathcal L_n \in \mathfrak n\big\}
\end{equation}
would decay super-exponentially for $\P_0$- almost every $\omega\in \Omega_0$, with $\mathfrak n$ being some neighborhood of $\mu_\beta$. However, since for every $\omega$, the relative entropy of $\pi^{\ssup\beta}(\omega,\cdot)$ w.r.t. $\pi_\omega(0,\cdot)$ is bounded below and above, the probability in \eqref{eq:superexp} decays exponentially and we have a contradiction.

Assume that there exists no such neighborhood $\mathfrak u$ of $\pi^{\ssup\beta}$. Let $\tilde\pi_n\to\pi^{\ssup\beta}$ such that for all $n\in \N$, $\tilde\pi_n$ has an invariant density $\phi_n$ and $(\tilde\pi_n,\phi_n)\in\mathcal E$. If $(\mu_n)_n$ is the sequence corresponding to $(\tilde\pi_n,\phi_n)$, since $\Mcal_1(\Omega_0\times\mathbb U_d)$ is compact, $\mu_n\Rightarrow\mu_\beta$ weakly along a subsequence. However, by our choice of $\beta>\beta_u$, $(\pi^{\ssup\beta},\phi)\notin \mathcal E$ for any density $\phi$ and hence $\mu_\beta\notin \Mcal_1^\star$ and $\mathfrak I(\mu_\beta)=\infty$. But,
$$
\lim_{n\to\infty} \mathfrak I(\mu_n)= \int \d\P_0 \phi(\omega) \sum_{e\in \mathbb U_d} \pi^{\ssup\beta}(\omega,e) \log \frac{\pi^{\ssup\beta}(\omega,e)}{\pi_\omega(0,e)},
$$
which is clearly finite. This proves that $\mathfrak I$ is not lower semicontinuous. 
\end{proof}

{\bf{Acknowledgments.}} Both authors were partially supported by ERC StG grant 239990. The second author wishes to thank S. R. S. Varadhan (New York)
for his encouragement and valuable discussions. He also thanks Nina Gantert (Munich) for her support.





\end{document}